\newtheorem{theorem}{Theorem}[section] 
\newtheorem{proposition}[theorem]{Proposition} 
\newtheorem{lemma}[theorem]{Lemma} 
\newtheorem{corollary}[theorem]{Corollary}
\newtheorem{construction}[theorem]{Construction}
\theoremstyle{remark}
\newtheorem{remark}[theorem]{Remark}
\newtheorem{remarks}[theorem]{Remarks}
\theoremstyle{definition}
\newtheorem{definition}[theorem]{Definition} 
\def\cD{\mathcal{D}}
\def\cP{\mathcal{P}}
\def\cS{\mathcal{S}}
\def\cT{\mathcal{T}}
\def\RR{\mathbb{R}}
\def\RR{\mathbb{R}}
\DeclareMathOperator{\conv}{conv}
\DeclareMathOperator{\verts}{vert}
\DeclareMathOperator{\relint}{relint}
\DeclareMathOperator{\signe}{sign}
\newcommand{\sprod}[2]{\langle {#1} , {#2} \rangle} %
\newcommand{\defn}[1]{\emph{#1}} 
\newcommand{\set}[2]{\ensuremath{\left\{#1\,\middle|\,#2\right\}}} %
\newcommand{\ffloor}[2]{\left\lfloor{\frac{#1}{#2}}\right\rfloor} %
\newcommand{\wh}[1]{\widehat{#1}}
\newlength{\dhatheight}
\newcommand{\wwh}[1]{%
\settoheight{\dhatheight}{\ensuremath{\widehat{#1}}}%
\addtolength{\dhatheight}{-0.4ex}%
\widehat{\smash{\widehat{#1}}\phantom{\rule{3pt}{\dhatheight}}}}
\def\e{\mathrm{e}}
\newcommand{\cyc}[2]{C_{#1}(#2)}
\newcommand{\inpK}[2]{\operatorname{ins}_K({#1,#2})}
\newcommand{\Del}{\cD}
\newcommand{\sscb}{ssc} %
\newcommand{\np}{c}%
\newcommand{\sproj}{s}%
\newcommand{\csproj}[1]{\sproj_{#1}}%
\newcommand{\csp}{\csproj{\np}}%
\newcommand{\isp}{\csp^{-1}}%
\newcommand{\lex}{lex} %
\newcommand{\ball}[1]{B_{#1}} %
\begin{document}

\title{Neighborly inscribed polytopes and Delaunay triangulations}

\author{Bernd Gonska and Arnau Padrol}
\email{GonskaB@gmail.com}

\thanks{The research of Arnau Padrol is supported by the DFG Collaborative Research Center SFB/TR~109 ``Discretization in Geometry and Dynamics''}
\address{Institut f\" ur Mathematik, Freie Universit\"at Berlin\\
 Arnimallee 2, 14195 Berlin, Germany}
\email{arnau.padrol@fu-berlin.de}

\begin{abstract}

We construct a large family of neighborly polytopes that can be realized 
with all the vertices on the boundary of any smooth strictly convex body.
In particular, we show that there are superexponentially many combinatorially 
distinct neighborly polytopes that admit realizations inscribed on the sphere.
These are the first examples of inscribable neighborly polytopes that
are not cyclic polytopes, and provide the current best lower bound for the
number of combinatorial types of inscribable polytopes (which coincides with the 
current best lower bound for the number of combinatorial types of polytopes).
Via stereographic projections, this translates into a superexponential lower 
bound for the number of combinatorial types of (neighborly) Delaunay triangulations.

\keywords{neighborly polytope, inscribed polytope, Delaunay triangulation,
number of combinatorial types}

\end{abstract}
\maketitle

\section{Introduction}
\label{intro}

A polytope is called \defn{inscribable} if it has a realization with all its
vertices on the sphere. 
The question of whether every polytope is inscribable was first considered by
Steiner in 1832~\cite{Steiner1832} and the first negative examples were found by
Steinitz in 1928~\cite{Steinitz1928}. 
For example, the polytope obtained by stacking a tetrahedron on top of each
facet of a tetrahedron is not inscribable (on the sphere). 

Via stereographic
projections, inscribable polytopes are in 
correspondence with Delaunay triangulations \cite{Brown1979}, 
which are central objects in computational 
geometry~\cite{Edelsbrunner2006}. Their applications include nearest-neighbors search, 
pattern matching, clustering and mesh generation, among others. This has triggered a 
renewed interest in inscribable polytopes.

Deciding whether a polytope is inscribable is in general a hard 
problem~\cite{AdiprasitoPadrolTheran2014,PadrolTheran2014}.
In dimension~$3$ this can be done efficiently thanks to a fundamental characterization 
found by Hodgson, Rivin and Smith using angle structures and hyperbolic geometry 
\cite{HodgsonRivinSmith1992,Rivin1996}. In higher dimensions, however, the 
question of inscribability is still wide open.
It is not even known which vectors appear as $f$-vectors of inscribable simplicial polytopes~\cite{Gonskaphd}.

By McMullen's Upper Bound Theorem~\cite{McMullen1970}, the complexity
of inscribed polytopes (and Delaunay triangulations) is bounded by that of 
neighborly polytopes.
The existence of inscribed neighborly polytopes
was already known to Carath\'eodory in 1911, when he presented an inscribed
realization of the cyclic polytope \cite{Caratheodory1911}.
Inscribed cyclic polytopes were also used by Seidel~\cite{Seidel1987,Seidel1991} in 
the context of an upper bound theorem for Delaunay triangulations.
While more inscribed realizations of the cyclic polytope are known
(c.f.\ \cite{Gonskaphd}, \cite{GonskaZiegler2013}), 
no other example of inscribable neighborly polytope has been found.
The centrally symmetric analogues of cyclic polytopes constructed by Barvinok and Novik
are also inscribed~\cite{BarvinokNovik2008}.

Without the constraint of inscribability, Gr\"unbaum
found the first examples of non-cyclic neighborly
polytopes~\cite{GrunbaumConvexPolytopes}. Even more, Shemer used the \emph{sewing construction} to prove in 1982
that the number of combinatorial types of neighborly $d$-polytopes with $n$
vertices is of order $n^{\frac n2(1+o(1))}$~\cite{Shemer1982}. This bound was
recently improved in \cite{Padrol2013, Padrolphd}, by proposing a new construction for neighborly
polytopes that contains Shemer's family. 
Even if this method cannot generate all neighborly polytopes, it can be used to
show that there are at least
$n^{\frac{d}{2}n(1+o(1))}$ different combinatorial types of labeled neighborly
$d$-polytopes with $n$ vertices (as $n\rightarrow\infty$ with $d$ fixed). This
is currently also the best lower bound for the number of combinatorial types of
labeled $d$-polytopes with $n$ vertices.

Our main contribution in this paper is to show that all these neighborly
polytopes are inscribable.
To this end, we revisit the construction in \cite{Padrol2013,Padrolphd} using a
technique developed in~\cite{Gonskaphd,GonskaZiegler2013} to construct
inscribable cyclic polytopes via Delaunay triangulations. 
This provides a very simple construction
(Theorem~\ref{thm:inscribableandneighborly} and
Construction~\ref{cons:construction}) for high dimensional inscribable
neighborly polytopes (and hence also for neighborly Delaunay triangulations  and
dual-to-neighborly Voronoi diagrams). 
With it we conclude (see
Theorem~\ref{thm:bound}) that the number of different labeled
combinatorial types of inscribable neighborly $d$-polytopes with $n$
vertices is at least $n^{\frac{d}{2}n(1+o(1))}$ (as
$n\rightarrow \infty$ with $d$ fixed). As a reference, the best upper bound
for the number of different labeled combinatorial types of $d$-polytopes 
with $n$ vertices is of order $\left({n}/{d} \right)^{d^2n(1+o(1))}$ when $\tfrac{n}{d}\rightarrow
\infty$ (see~\cite{Alon1986} and~\cite{GoodmanPollack1986}). 

Actually, we prove a stronger result, since we see that all the polytopes in
this large family are $K$-inscribable for any smooth strictly convex body $K$.
That is, that they admit a realizaton with all the vertices on the boundary of
$K$. The existence of arbitrarily large families of \emph{universally inscribable} polytopes
(universal referring to all smooth strictly convex bodies) is a new result, 
to the best of the authors' knowledge (although there are alternative
methods to construct them, see Section~\ref{sec:universallyinscribable}). 
We complement this result by constructing a universally inscribable 
stacked polytope, which shows that the Lower Bound Theorem is also 
attained for simplicial polytopes in this family.
A related result of Schramm states that every $3$-polytope admits a realization
with all the edges tangent to any smooth strictly convex body~\cite{Schramm1992}.
In the converse direction, Ivanov proved that there exist 
\emph{universally circumscribing} convex bodies $K\subset \RR^d$ that fulfill 
that every $d$-polytope is $K$-inscribable~\cite{mathoverflow}. 

This begs the question of which other polytopes are universally inscribable. Also, are all (even-dimensional) 
neighborly polytopes inscribable? So far we do not
know any counterexample. Moritz Firsching found inscribed realizations for all neighborly $4$-polytopes
with up to $11$ vertices, including those that are not constructible with our methods (personal communication).

\section{Preliminaries}

If a point configuration $A=\{a_1,\dots,a_n\}\subset \RR^d$, labeled by $\{1,\dots,n\}$,
is the vertex set of a $d$-dimensional convex polytope $P$ (a \defn{$d$-polytope} from now on), we say that it is in \defn{convex position}.
Each face $F$ of $P$ can then be identified with the set of labels of the points
$a_i\in F$. The \defn{face lattice} of $P$, which is the poset of faces of $P$ 
ordered by inclusion, can thus be seen as a poset of subsets of
$\{1,\dots,n\}$. In this context, two vertex-labeled polytopes are
\defn{combinatorially equivalent} if their face lattices coincide. The
equivalence classes under this relation are called \defn{labeled combinatorial
types}. 

If $A\subset \RR^d$ is in \defn{general position}, i.e.\ no $d+1$ points of $A$ lie in
a common hyperplane, then $P$ is \defn{simplicial}, i.e.\ every facet of
$P$ is a simplex. A polytope $P$ is \defn{$k$-neighborly} if every subset of $k$
vertices
of~$P$ forms a face of~$P$. No $d$-polytope other than the simplex
can be $k$-neighborly for any $k>\ffloor{d}{2}$, which motivates the definition
of \defn{neighborly} polytopes as those that are $\ffloor{d}{2}$-neighborly.

A canonical example of neighborly polytope is the \defn{cyclic polytope},
$\cyc{n}{d}$, obtained as the convex hull of $n$ points on any $d$-order curve
in $\RR^d$ \cite{Sturmfels1987}. For example, the moment curve $\gamma:t\mapsto
(t,t^2,\dots,t^d)$ is a $d$-order curve. In even dimensions, the trigonometric
moment curve $$\tau:t\mapsto
(\sin(t),\cos(t),\sin(2t),\cos(2t),\dots,\sin(\tfrac d2t),\cos(\tfrac d 2t))$$
is a $d$-order curve on the sphere, providing inscribed realizations of the
cyclic polytope~\cite{Caratheodory1911} (see also \cite[Exercise~4.8.23]{GrunbaumConvexPolytopes}).

A \defn{triangulation} of a point configuration $A$ is a collection $\cT$ of
simplices with vertices in $A$, which we call \defn{cells}, that cover the
convex hull of $A$ and such that any pair of simplices of $\cT$ intersect in a
common face. Two triangulations of $A$ are \defn{combinatorially
equivalent} if they have the same poset of cells (as subsets of
labels of $A$).
We say that a triangulation $\cT$ of a point configuration~$A\subset\RR^d$ is
\defn{neighborly} if $\conv (S)$ is a cell of $\cT$ for each subset $S\subset A$
of size $|S|=\ffloor{d+1}{2}$.

\section{Stereographic projections}

A \defn{convex body} $K$ is a full-dimensional compact convex subset of
$\RR^d$. It is \defn{strictly convex} if its boundary $\partial K$ does not
contain any segment. A point $c\in \partial K$ in the boundary of $K$ is \defn{smooth} if it 
has a unique supporting hyperplane, and $K$ is called \defn{smooth} if every point in its boundary
is smooth.  
We abbreviate smooth
strictly convex body by \defn{\sscb-body}.

Let $K$ be an \sscb-body, let $\np$ be a point in $\partial K$ and let $H_\np$ be a translate of a
supporting hyperplane of $K$ at $\np$ that does not contain $\np$. Then for 
every $x\in \partial K\setminus \np$ the line spanned by $\np$ and $x$ 
intersects $H_\np$ in a unique point $\csp(x)$. Observe that $\csp: \partial K\setminus \np \to H_\np$
is a bijection: it is well-defined and injective because $K$ is strictly convex and surjective because $K$ is smooth.

\begin{definition}
For an \sscb-body, a point $c\in\partial K$ and a hyperplane $H_\np$ as above, the map
$\csp:\partial K\setminus \np\rightarrow \RR^{d-1}$ that maps $x$ to
$\csp(x)$ is the \defn{$K$-stereographic projection} of $\partial K$ onto $\RR^{d-1}$ with
center $\np$.
\end{definition}

Observe that the choice of the translation of the hyperplane only
changes the $K$-stereographic projection by a dilation. We
will choose $H_\np$ canonically to be tangent to $K$. 

Let $K$ be a $(d+1)$-dimensional \sscb-body and fix a $K$-stereographic
projection~$\csp$. A \defn{$K$-sphere} is the image $\csp(H\cap \partial K)$
for some hyperplane $H\subset\RR^{d+1}$ that does not
contain~$\np$. It is the boundary of the
\defn{$K$-ball} $\csp(H\cap K)\subset \RR^d$, which is an \sscb-body projectively equivalent to 
$H\cap K$. Let $a_1,\dots,a_d$ be $d$ affinely independent points
in $\RR^d$. Then the points
$\isp(a_1),\dots,\isp(a_d)\in\RR^{d+1}$ are affinely
independent and span a hyperplane $H\subset\RR^{d+1}$ that does not
contain~$\np$. We call the $K$-sphere $\csp(H\cap \partial K)$ and the $K$-ball $\csp(H\cap K)$ the
\defn{$K$-circumsphere} and the \defn{$K$-circumball} spanned by $a_1,\dots,a_d$, respectively. An example
is sketched in Figure~\ref{fig:eggstereographicprojection}.

\begin{figure}[htpb]
\includegraphics[width=.7\linewidth]{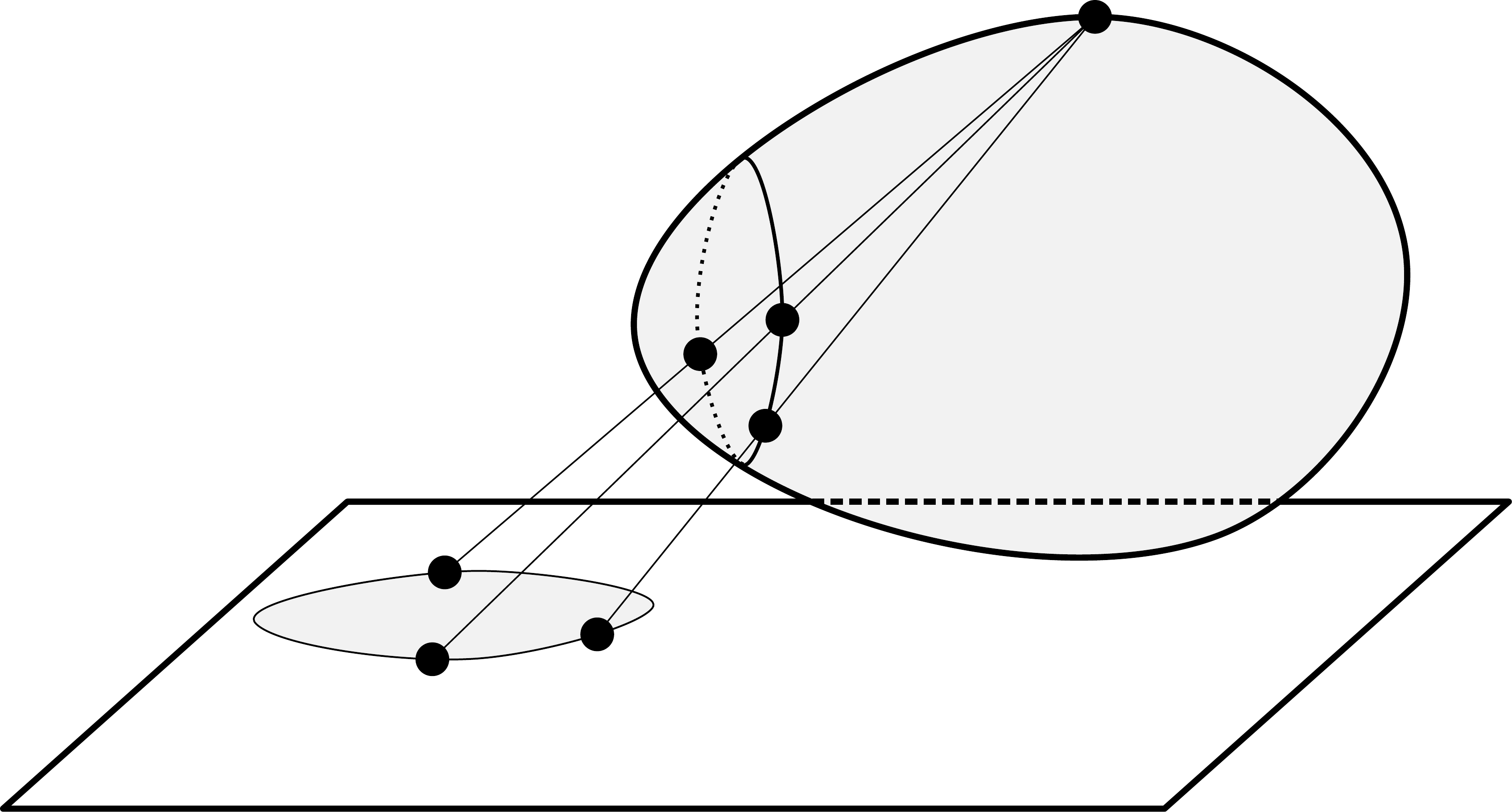}
\caption{A $K$-stereographic projection and a
$K$-circumsphere.}\label{fig:eggstereographicprojection}
\end{figure}

The key lemma concerning $K$-spheres and $K$-stereographic projections is
the following.

\begin{lemma}\label{lem:Kprojectionmatroid}
Let $K\subset \RR^{d+1}$ be an \sscb-body and let $\csp$ be a $K$-stereographic
projection. Let $a_1,\dots,a_d \in \RR^d$ be affinely independent points and let
$H$ be the hyperplane spanned by $\isp(a_1),\dots,\isp(a_d)$, oriented so that
$\np$ lies in the negative open halfspace $\np\in H^-$. 

Then a point $p\in\RR^d$ is an interior, boundary or exterior point of the
\defn{$K$-circumsphere} spanned by $a_1,\dots,a_d$ if and only if
$\isp(p)$ lies respectively in $H^+$, $H$ or $H^-$.
\end{lemma}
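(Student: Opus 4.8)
The plan is to upgrade the stereographic projection $\csp$ to the central projection $\pi$ from $\np$, viewed as a partial map on $\RR^{d+1}$ that restricts to $\csp$ on $\partial K\setminus\np$, and then, for a test point $p\in\RR^d$, to track the chord of $K$ carried by the line $\ell_p$ through $\np$ and $\isp(p)$. The position of $\isp(p)$ relative to $H$ will be governed by whether this chord misses $H$, crosses $H$ transversally, or ends on $H$, and each of these three alternatives will be matched with one of ``$p$ exterior'', ``$p$ interior'', ``$p$ on'' the $K$-circumsphere.

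First I would pin down the geometry of $\pi$. Since $H_\np$ is a translate of the supporting hyperplane of $K$ at $\np$ --- which is unique, as $\np$ is smooth --- the hyperplane through $\np$ parallel to $H_\np$ is exactly the tangent hyperplane $T_\np$ at $\np$; by strict convexity $K\cap T_\np=\{\np\}$, so $K\setminus\np$ is disjoint from $T_\np$. Consequently $\pi$ --- which fails to be defined precisely on $T_\np$ --- is finite on $K\setminus\np$, it is injective on any hyperplane not through $\np$ (a line through $\np$ meets such a hyperplane in at most one point), and its fibre over $p$ is the punctured line $\ell_p\setminus\np$. Strict convexity also gives $\ell_p\cap\partial K=\{\np,\isp(p)\}$, so the chord $C_p:=\ell_p\cap K$ is the segment $[\np,\isp(p)]$, with $\relint C_p\subset\intr K$. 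Finally, $H$ contains the distinct boundary points $\isp(a_1),\dots,\isp(a_d)$ and hence cannot support the strictly convex body $K$, so $H\cap\intr K\neq\emptyset$; therefore $H\cap K$ is a $d$-dimensional convex body disjoint from $T_\np$, and $\pi$ restricts to a homeomorphism of $H\cap K$ onto the $K$-circumball $\csp(H\cap K)$ carrying $\relint(H\cap K)=H\cap\intr K$ onto its interior.

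The heart of the argument is then a trichotomy for $C_p$. As $\np\in C_p\setminus H$, the intersection $H\cap C_p$ is either empty, a single point of $\relint C_p$, or the single point $\isp(p)$; since $\np\in H^-$, an elementary sign computation along the segment $C_p$ shows that these three cases correspond exactly to $\isp(p)\in H^-$, $\isp(p)\in H^+$, and $\isp(p)\in H$. It then remains to identify $p$'s position in each case. If $H\cap C_p=\{\isp(p)\}$ then $\isp(p)\in\partial K\cap H$, so $p=\csp(\isp(p))$ lies on $\csp(\partial K\cap H)$, i.e.\ on the $K$-circumsphere. If $H\cap C_p=\{q\}$ with $q\in\relint C_p\subset\intr K$, then $q\in H\cap\intr K$ and $\pi(q)=p$, so $p$ lies in the interior of the $K$-circumball. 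If $H\cap C_p=\emptyset$, then any preimage of $p$ lying in $H\cap K$ would have to lie in $\ell_p\cap K=C_p$ and hence in $H\cap C_p=\emptyset$; so $p\notin\csp(H\cap K)$, i.e.\ $p$ is exterior to the $K$-circumsphere. Because the three cases are mutually exclusive and exhaustive on both sides, these forward implications upgrade to the stated equivalences.

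The step I expect to demand the most care is the first one: making rigorous the intuitively clear picture that the cone over $H\cap K$ with apex $\np$ reaches $H_\np$ without escaping to infinity --- so that the $K$-circumball is an honest convex body and $\pi$ genuinely restricts to a homeomorphism on it --- and that $\pi$ is injective on $H$. Once $\pi$ is known to be a well-behaved partial central projection with these properties, the rest is just the incidence geometry of a line, a chord of $K$, and the hyperplane $H$.
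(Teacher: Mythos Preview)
The paper states this lemma without proof, so there is no argument of the authors to compare against. Your proof is correct and is precisely the natural one: extend $\csp$ to the central projection $\pi$ from $\np$, track the chord $C_p=[\np,\isp(p)]$ of $K$, and match the three possibilities for $C_p\cap H$ (empty, an interior point, the endpoint $\isp(p)$) with the three positions of $\isp(p)$ relative to $H$ and then with the three positions of $p$ relative to the $K$-circumball. The sign argument along $C_p$ and the case analysis are sound.

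The step you single out as needing the most care---that $\pi(H\cap K)$ is a genuine full-dimensional convex body in $H_\np\cong\RR^d$ whose topological interior is $\pi(H\cap\intr K)$---is indeed the only point that is not completely mechanical. The paper itself takes it for granted (the $K$-ball is declared to be ``an \sscb-body projectively equivalent to $H\cap K$''), and it follows once one observes that $\pi$ restricted to $H\setminus T_\np$ is a projective transformation onto its image in $H_\np$, hence a homeomorphism on the compact set $H\cap K$ that carries relative interior to interior.
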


The \defn{$K$-Delaunay subdivision} $\Del_K(A)$ of a point configuration
$A\subset\RR^d$ consists of
all cells defined by the \defn{empty $K$-sphere condition}: for
$S\subseteq A$, $\conv(S)$ is a cell in $\Del_K(A)$ if and only if there is a
$K$-sphere that contains $S$ and has all the remaining points of
$A$ outside the corresponding $K$-ball. A cell that fulfills the empty $K$-sphere condition
is called a \defn{$K$-Delaunay cell}. These cells always form a (regular) subdivision of $A$, 
the \defn{$K$-Delaunay subdivision}, as a consequence of the upcoming Lemma~\ref{lem:KdelaunayKinscribed}.

If $A$ is in sufficiently general position  (for example if no $d+1$ points lie in
 a common hyperplane and no $d+2$ points lie on a common $K$-sphere), then the
empty $K$-sphere  condition always defines a (regular) triangulation
of~$A$, the \defn{$K$-Delaunay triangulation}.

Although the definition of $K$-spheres and $K$-balls depends on the choice of the 
$K$-stereographic projection~$\csp$, we will often simplify our statements by 
omitting the specification of~$\csp$. Hence, we will 
talk about $K$-Delaunay triangulations considering that there is some 
$K$-stereographic projection fixed beforehand.

\begin{definition}
 Let $K$ be a convex body. A polytope is \defn{$K$-inscribed} if all its
vertices lie on $\partial K$ and it is \defn{$K$-inscribable} if it is 
combinatorially equivalent to a $K$-inscribed polytope. 
\end{definition}

The following lemma ties together all the concepts presented in this section.
It is a direct consequence of Lemma~\ref{lem:Kprojectionmatroid}.

\begin{lemma}\label{lem:KdelaunayKinscribed}
Let $K$ be an \sscb-body, $\csp$ a $K$-stereographic projection, $A=\{a_1,\dots,a_{n}\}\in\RR^d$ a point configuration in $\RR^d$ and $P_A$ the $K$-inscribed polytope $P_A=\conv(\{\np\}\cup
\set{\isp(a_i)}{a_i\in A})$. 

Then for every subset $S\subseteq A$:
 \begin{enumerate}[(i)]
  \item\label{it:lowDel} $\conv(\isp({S}))$ is a face of $P_A$ if and only if
$S$ is a cell of $\Del_K(A)$, and
  \item\label{it:uppDel} $\conv(\isp({S})\cup \{\np\})$ is a face of $P_A$ if
and
only if $\conv(S)$ is a face of $\conv(A)$.
\end{enumerate}
\end{lemma}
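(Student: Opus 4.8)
The plan is to translate both equivalences into statements about supporting hyperplanes of $P_A$. First I would record the basic observation that, since $K$ is strictly convex, every point of $\{\np\}\cup\isp(A)$ is an extreme point of $K$ and hence a vertex of $P_A$; thus the vertices of $P_A$ are \emph{exactly} these points, and a subset $T\subseteq\{\np\}\cup\isp(A)$ is the vertex set of a face of $P_A$ if and only if there is a hyperplane $H\subset\RR^{d+1}$ with $T\subseteq H$ and every other vertex of $P_A$ in one fixed open halfspace, which I will always call $H^-$. Part~\ref{it:lowDel} then corresponds to the case where this supporting hyperplane avoids $\np$, and part~\ref{it:uppDel} to the case where it passes through $\np$.

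For part~\ref{it:lowDel} I would invoke Lemma~\ref{lem:Kprojectionmatroid}; although it is phrased for $K$-circumspheres spanned by $d$ points, the same argument applies to an arbitrary $K$-sphere $\csp(H\cap\partial K)$ oriented with $\np\in H^-$, and I would use it in that form. If $\conv(\isp(S))$ is a face of $P_A$, let $H$ be a supporting hyperplane along it; since $\np$ is a vertex not on this face, $\np\notin H$, so orient $H$ with $\np\in H^-$. Then $\isp(A)\cap H=\isp(S)$ and $\isp(a_j)\in H^-$ for all $a_j\in A\setminus S$. The set $\csp(H\cap\partial K)$ is a $K$-sphere through $S$ (because $\isp(a_i)\in H\cap\partial K$ and $\csp(\isp(a_i))=a_i$ when $a_i\in S$), and Lemma~\ref{lem:Kprojectionmatroid} says that $\isp(a_j)\in H^-$ means precisely that $a_j$ is exterior to this $K$-sphere, i.e.\ outside the corresponding $K$-ball; so $S$ satisfies the empty $K$-sphere condition. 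The converse is the same chain read backwards: a $K$-sphere witnessing $S\in\Del_K(A)$ has the form $\csp(H\cap\partial K)$, and Lemma~\ref{lem:Kprojectionmatroid} turns ``$S$ on the sphere, $A\setminus S$ outside the ball'' into ``$\isp(S)\subseteq H$, $\isp(A\setminus S)\subseteq H^-$'', which together with $\np\in H^-$ exhibits $\conv(\isp(S))=P_A\cap H$ as a face.

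For part~\ref{it:uppDel} the relevant supporting hyperplanes contain $\np$, so Lemma~\ref{lem:Kprojectionmatroid} does not apply and I would insert a short ``sidedness'' observation about $\csp$ itself. By the definition of $\csp$, for every $x\in\partial K\setminus\{\np\}$ the point $\csp(x)$ lies on the \emph{forward} ray from $\np$ through $x$: indeed $x$ lies strictly on the $K$-side of the supporting hyperplane of $K$ at $\np$, so the line $\np x$, parametrized from $\np$ towards $x$, meets $H_\np$ at a parameter $t>0$. Hence, if $H\ni\np$ is any hyperplane other than the supporting hyperplane of $K$ at $\np$, then $\ell:=\csp\bigl((H\cap\partial K)\setminus\{\np\}\bigr)=H\cap H_\np$ is an honest hyperplane of $\RR^{d}$, and the identity $\langle n,\csp(x)\rangle-b=t\,(\langle n,x\rangle-b)$ with $t>0$ (writing $H=\{z:\langle n,z\rangle=b\}$, so $\langle n,\np\rangle=b$) shows that $\csp$ carries $H^-\cap(\partial K\setminus\{\np\})$ onto one open side of $\ell$, $H^+\cap(\partial K\setminus\{\np\})$ onto the other, and the remainder of $H\cap\partial K$ onto $\ell$. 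Granting this: if $\conv(\isp(S)\cup\{\np\})$ is a face of $P_A$ with supporting hyperplane $H\ni\np$ (oriented so $P_A\subseteq H\cup H^-$), then for $S\neq\emptyset$ the hyperplane $H$ is not the supporting hyperplane of $K$ at $\np$ (otherwise $P_A\cap H=\{\np\}$), so $\ell$ is a hyperplane with $S\subseteq\ell$ and $A\setminus S$ in the open side corresponding to $H^-$, i.e.\ $\conv(S)$ is a face of $\conv(A)$; the case $S=\emptyset$ is immediate. Conversely, given a supporting hyperplane $\ell$ of $\conv(A)$ with $\conv(A)\cap\ell=\conv(S)$, I would set $H:=\aff(\{\np\}\cup\ell)$ and check that $\np\in H$, that $\isp(a_i)\in H$ for $a_i\in S$ (the line $\np a_i$ lies in $H$), and that $\isp(a_j)\in H^-$ for $a_j\in A\setminus S$ by the sidedness observation, so that $H$ supports $P_A$ exactly along $\conv(\isp(S)\cup\{\np\})$.

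The step I expect to be the main obstacle is this sidedness observation for part~\ref{it:uppDel}: one has to verify that $\csp$ sends the two caps that a hyperplane through $\np$ cuts out of $\partial K$ to the two halfspaces bounded by $\ell$ \emph{consistently}. Everything reduces to the elementary fact that $\csp(x)$ always lies on the forward ray $\np\to x$, which is exactly where strict convexity of $K$ and the choice of $H_\np$ enter; after that the bookkeeping is the one-line sign computation above. Part~\ref{it:lowDel}, by contrast, is little more than Lemma~\ref{lem:Kprojectionmatroid} combined with the definition of a face, which is why the whole statement can be presented as a direct consequence of Lemma~\ref{lem:Kprojectionmatroid}.
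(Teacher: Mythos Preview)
Your proof is correct and aligns with what the paper intends: the paper does not give a proof at all, stating only that the lemma ``is a direct consequence of Lemma~\ref{lem:Kprojectionmatroid}''. Your argument for part~\ref{it:lowDel} is exactly that direct consequence, and for part~\ref{it:uppDel} you rightly observe that Lemma~\ref{lem:Kprojectionmatroid} as stated does not cover hyperplanes through~$\np$, so you supply the needed sidedness computation (that $\csp(x)$ lies on the forward ray from $\np$ through $x$, whence a hyperplane through $\np$ and its two sides are carried by $\csp$ to a hyperplane in $H_\np$ and its two sides). This extra step is elementary and is presumably what the paper regards as implicit in ``direct consequence'', but you are right that it is not literally contained in Lemma~\ref{lem:Kprojectionmatroid}; making it explicit is an improvement, not a deviation.
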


\begin{remark}
 When we take $K$ to be $\ball{d+1}$, the $(d+1)$-dimensional unit ball, then 
 we recover the standard definitions of stereographic projection, Delaunay 
 triangulation and inscribed polytopes. In this case, though, the symmetry of 
 $\ball{d+1}$ makes these definitions independent of the choice of stereographic projection.
 Lemma~\ref{lem:KdelaunayKinscribed} for $K=\ball{d+1}$ is a classical result of
Brown~\cite{Brown1979} (cf. \cite[Proposition 0.3.13]{Gonskaphd}).
\end{remark}

\section{Liftings and triangulations}

\subsection{Lexicographic liftings}
The main tool for our construction are lexicographic liftings (which we abbreviate as \lex-liftings), which are a way to derive $(d+1)$-dimensional point configurations from $d$-dimensional point configurations. We use a variation that allows for controlling the $K$-Delaunay triangulation of the configurations, similar to a technique considered by Seidel for planar configurations~\cite{Seidel1985}.

Its definition requires the following notation. For an affine hyperplane $H=\set{x\in\RR^d}{\sprod{x}{v}=c}$ presented by a normal vector $v$ whose last coordinate is positive, a point $a\in \RR^d$ is said to be \defn{above} (resp. \defn{below}) $H$ if $\sprod{a}{v}>c$ (resp. $\sprod{a}{v}<c$).

\begin{definition}\label{def:lexicographiclifting}
Let $A=\{a_1,\dots,a_n\}$ be a configuration of $n\geq d+2$
labeled points in~$\RR^d$.

We say that a configuration $\wh A=\{\wh a_1,\dots,\wh a_n\}$ of $n$ labeled
points in~$\RR^{d+1}$ is a \defn{\lex-lifting} of $A$
(with respect to the order induced by the labeling) if $\wh
a_i=(a_i,h_i)\in\RR^{d+1}$ for each $1\leq i\leq n$,  
for some collection of heights $h_i\in \RR$ that fulfill:

\begin{enumerate}[(i)]
 \item\label{it:heights} 
 for each $i\geq d+2$,  $|h_{{i}}|$ is large enough so that $\wh a_{{i}}$ is above (if $h_{{i}}>0$)
 or below (if $h_{{i}}<0$) $H$ for every hyperplane $H$ spanned
by points in $\left\{\wh a_{{1}},\dots,\wh a_{{i-1}}\right\}$.

\end{enumerate}

Let $K$ be an \sscb-body in $\RR^{d+2}$ and $\csp$ a $K$-stereographic
projection. We call a \lex-lifting $\wh A$ of $A$ a \defn{$K$-lifting}
if moreover:
\begin{enumerate}[(i),resume]
\item\label{it:circumspheres} for each $i>d+2$, $\wh a_{{i}}$ is not contained
in any of the $K$-circumballs spanned by points in
$\left\{\wh a_{{1}},\dots,\wh a_{{i-1}}\right\}$.
\end{enumerate}
If $h_i\geq0$ for every $1\leq i \leq n$, the \lex-lifting is called \defn{positive}.
\end{definition}

Again, when working with $K$-liftings we will often omit the specification of the 
$K$-stereographic projection and assume that there is one fixed beforehand.

 If $A$ is in general position, then any \lex-lifting $\wh A$ of $A$ is
also in
general position. Furthermore, if $A$ is in convex position, then so is $\wh A$.

\begin{figure}[htpb]
\centering
 \begin{tabular}{cccc}
\includegraphics[width=.22\linewidth]{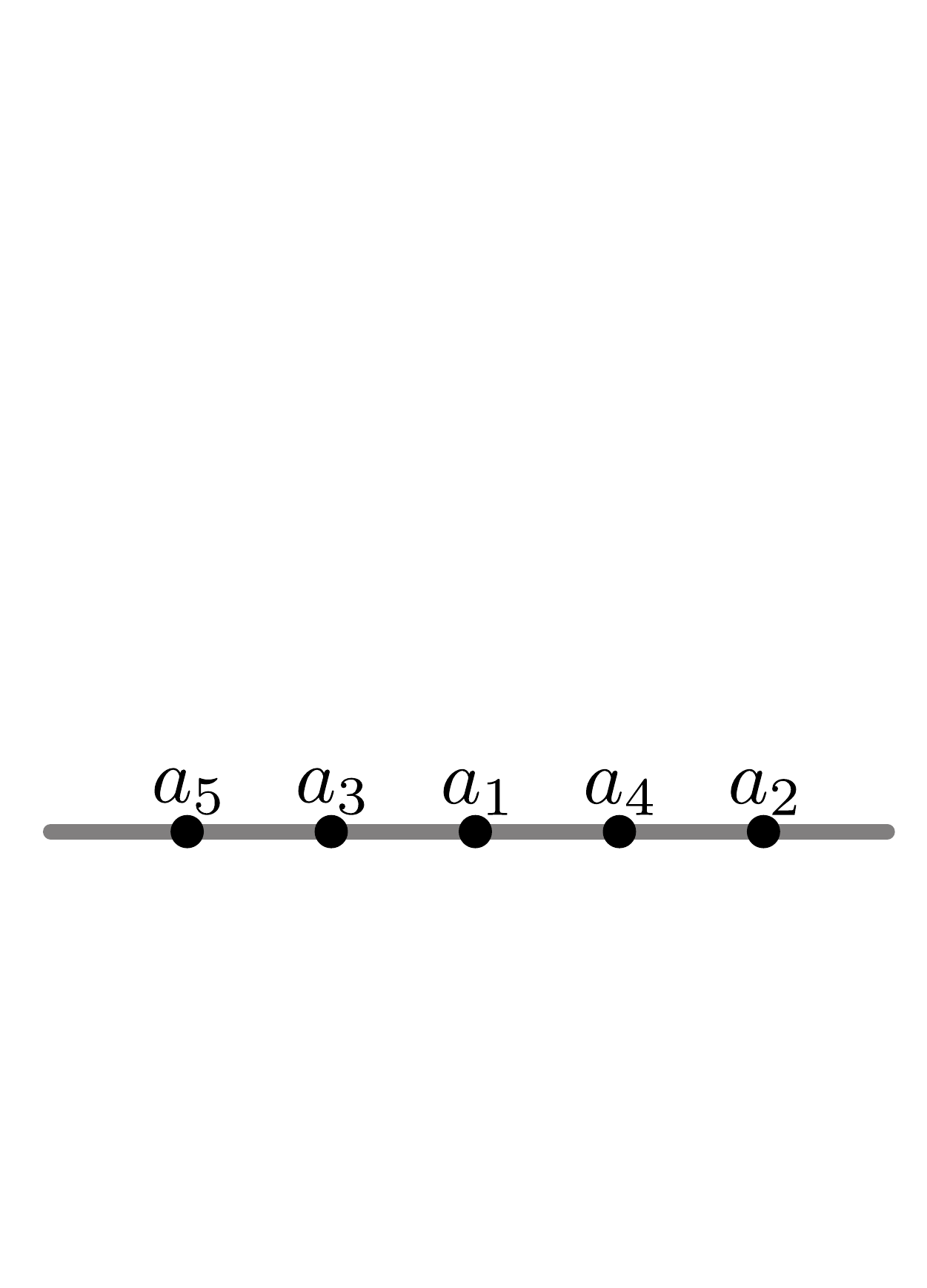}&
 \includegraphics[width=.22\linewidth]{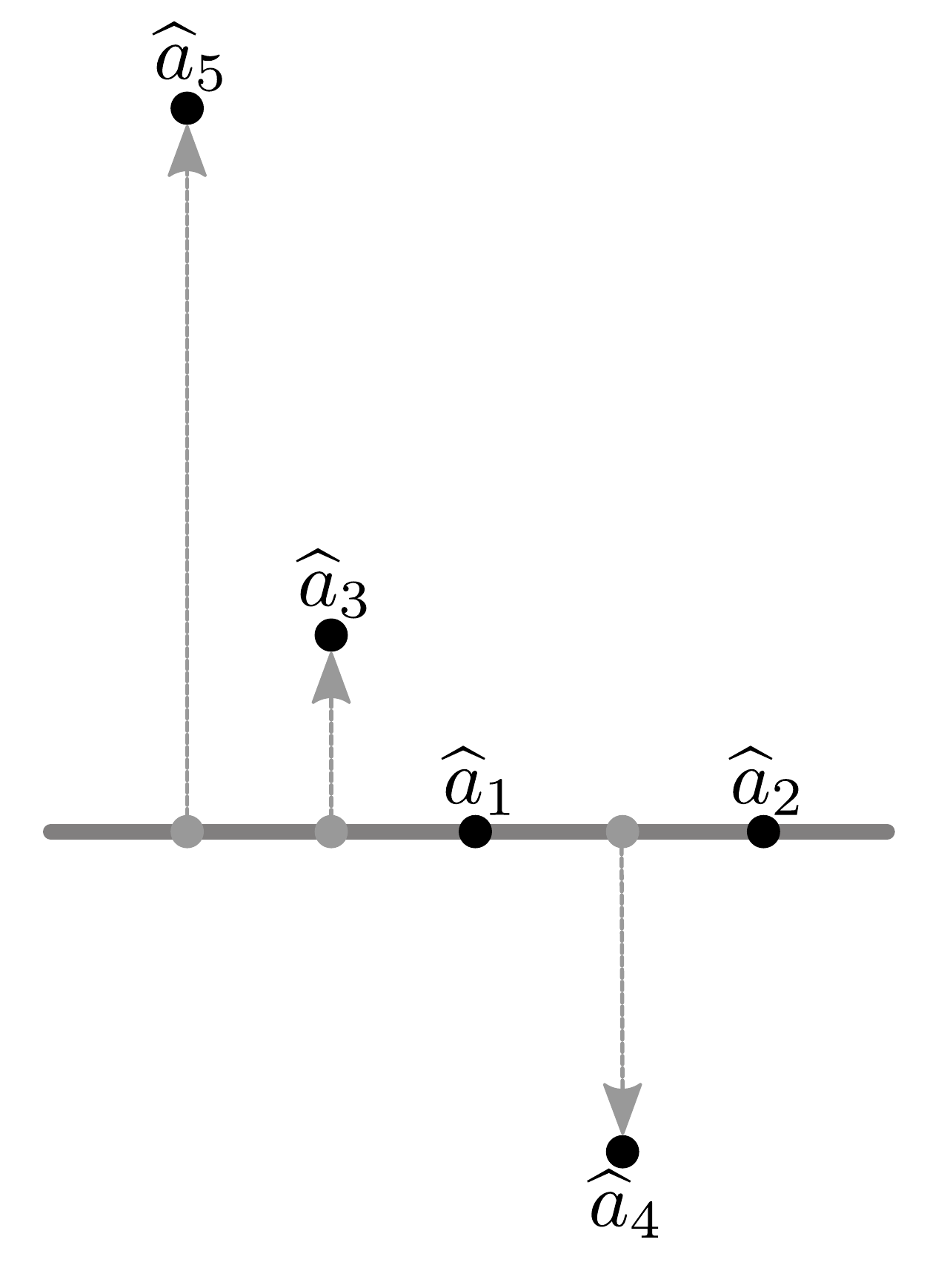}&
 \includegraphics[width=.22\linewidth]{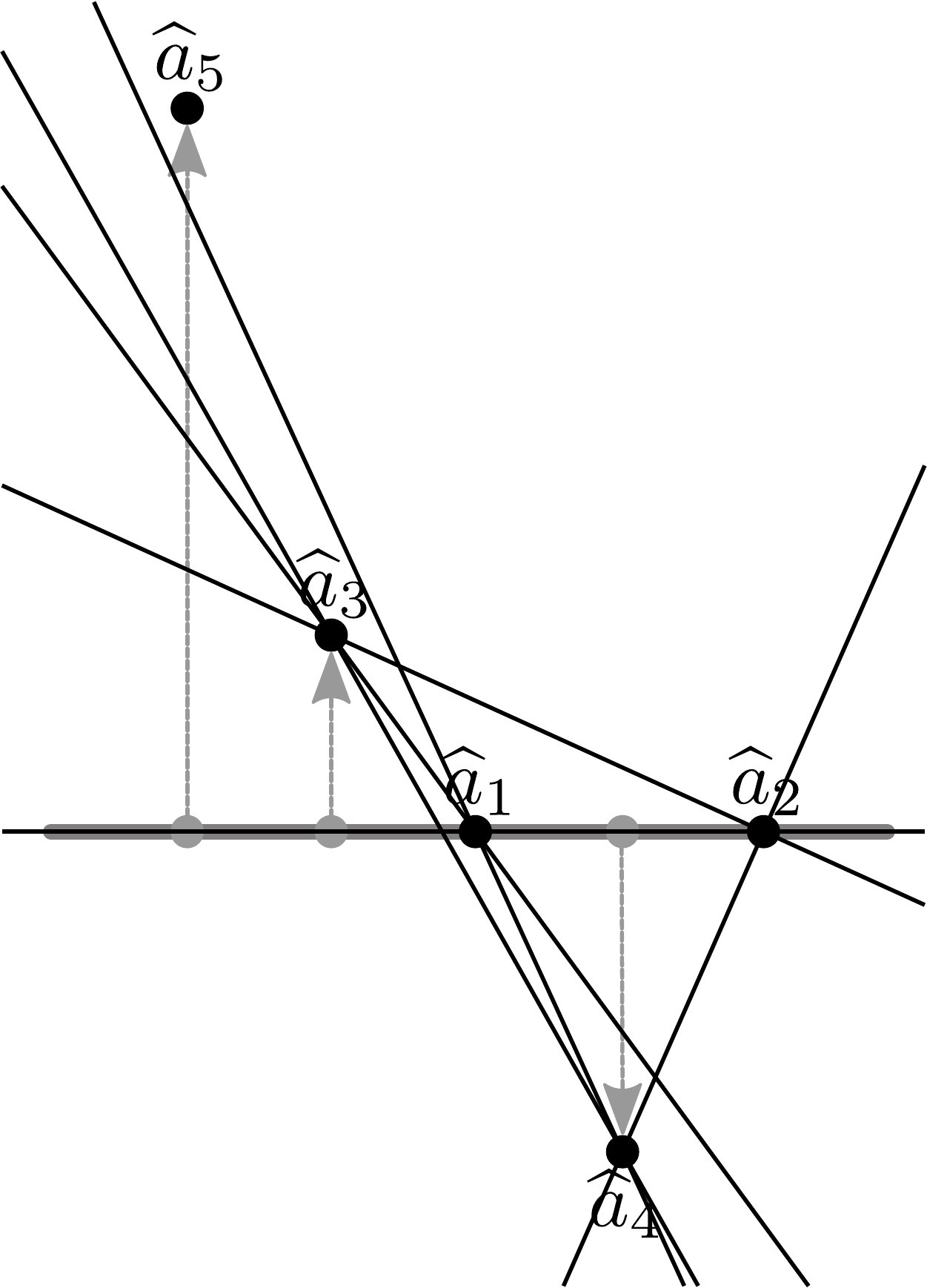}&
 \includegraphics[width=.22\linewidth]{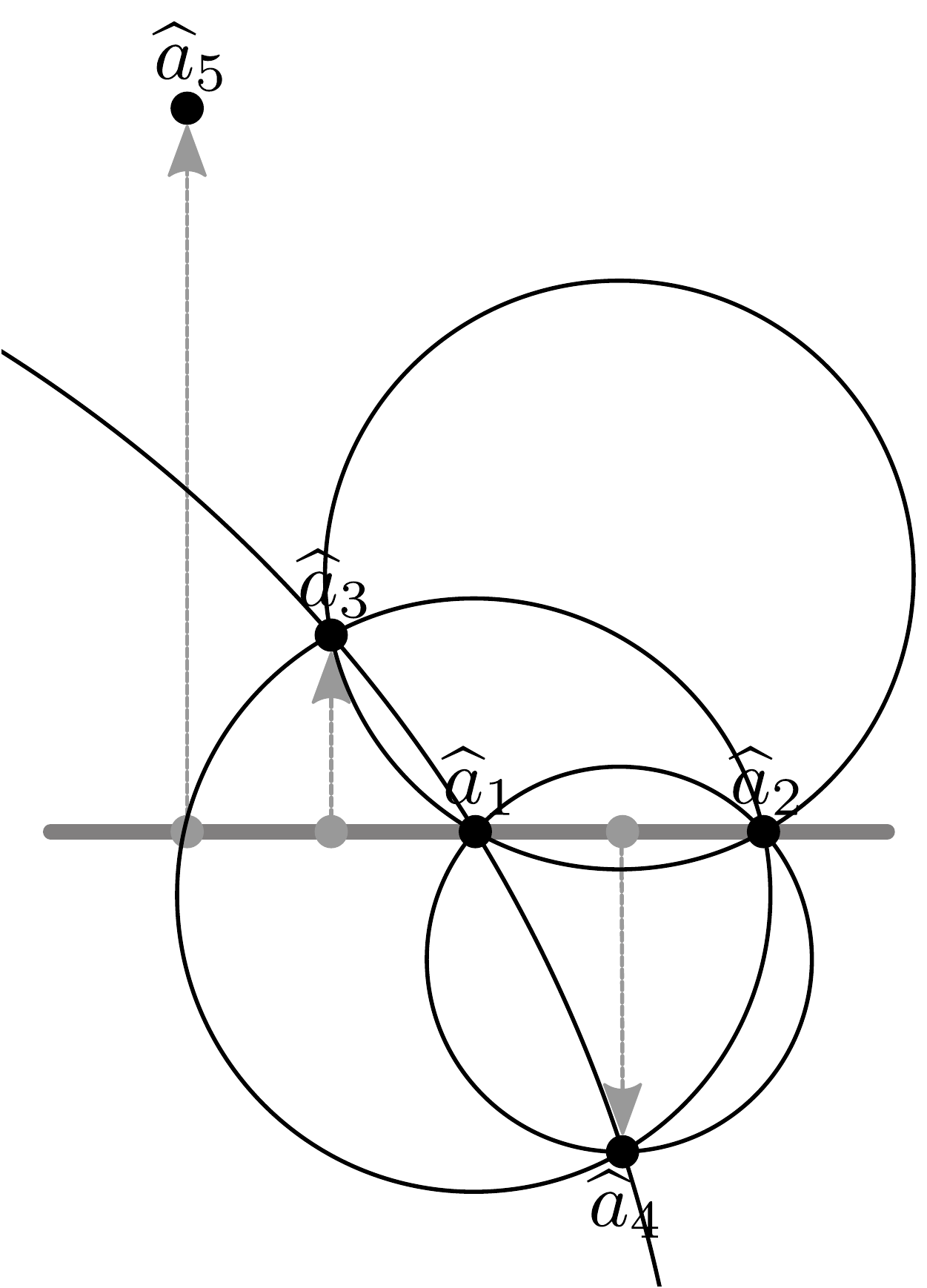}\\
 (a)&(b)&(c)&(d)
 \end{tabular}
 \caption{A point configuration $\{a_1,\dots,a_5\}$ (a) and one of its
$K$-liftings $\{\wh a_1,\dots,\wh a_5\}$ (b), where $K=\ball{d+2}$.
 It fulfills conditions~\ref{it:heights} (c) and~\ref{it:circumspheres} (d).
 }
\end{figure}

Our next steps require some extra notation. A face $F$ of a polytope $P$ is \defn{visible} from a
point $p$ if there is a point $x\in\relint(F)$ such that the segment
$[x, p]$ intersects $P$ only at $x$. In particular, if $p\in P$ is not a vertex, then no face is visible from $p$.

A facet of a $d$-polytope $P$ is a \defn{lower facet} if the last coordinate of
its outer normal vector is negative. The \defn{lower envelope} of $P$ is the
polytopal complex consisting of the lower facets of $P$ and their faces.
A face $F$ of $P$ is an \defn{equatorial face} if it admits a supporting 
hyperplane whose normal vector has $0$ as the last coordinate. 

\begin{lemma}\label{lem:visibleequator}
For any point configuration in general position $A$, and for any \lex-lifting
$\wh A$, every equatorial face of $\conv (\wh A\setminus \wh a_{{n}})$ is
visible from $\wh a_{{n}}$.
\end{lemma}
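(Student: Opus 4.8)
The plan is to reduce the statement to a facet-by-facet analysis of the simplicial $(d+1)$-polytope $Q:=\conv(\wh A\setminus\wh a_{n})$ (simplicial because, $A$ being in general position, so is $\wh A$). I will use the elementary observation that if $G$ is a facet of a polytope $Q$, if $F$ is a face with $F\subseteq G$, and if a point $p$ lies strictly on the far side of $\aff(G)$ from $Q$, then $F$ is visible from $p$: for any $x\in\relint(F)\subseteq\aff(G)$, every point of the half-open segment $(x,p]$ lies strictly on the far side of $\aff(G)$, hence outside $Q$, so $[x,p]\cap Q=\{x\}$. Thus it suffices to show that every equatorial face $F$ of $Q$ is contained in some facet $G$ such that $\wh a_{n}$ lies strictly on the far side of $\aff(G)$ from $Q$.

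Step~1 classifies the facets of $Q$. No facet $G$ of $Q$ is ``vertical'': $G$ is a $d$-simplex whose $d+1$ vertices span $\aff(G)$, and if $\aff(G)$ contained the direction $e_{d+1}$ these $d+1$ vertices would project to $d+1$ of the points $a_{1},\dots,a_{n-1}$ on a common hyperplane of $\RR^{d}$, contradicting general position. Hence the outer normal $\nu_{G}$ of $G$ has nonzero last coordinate; call $G$ \emph{upper} if $(\nu_{G})_{d+1}>0$ and \emph{lower} if $(\nu_{G})_{d+1}<0$. Now invoke condition~(i) of Definition~\ref{def:lexicographiclifting}, which applies to $\wh a_{n}$ since $n\geq d+2$: for every hyperplane $H$ spanned by points of $\wh A\setminus\wh a_{n}$ --- in particular $H=\aff(G)$ for a facet $G$ --- the point $\wh a_{n}$ lies strictly above $H$ if $h_{n}>0$ and strictly below if $h_{n}<0$. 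Unwinding the sign conventions: if $h_{n}>0$ then $\wh a_{n}$ lies beyond every upper facet of $Q$ and on the $Q$-side of every lower facet; if $h_{n}<0$, the two roles are exchanged. In either case $\wh a_{n}$ lies beyond all facets of one of the two types (so, incidentally, $\wh a_{n}\notin Q$).

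Step~2 handles the equatorial faces. Let $F$ be an equatorial face of $Q$, so some hyperplane with horizontal normal $v$ (that is, $v_{d+1}=0$, $v\neq0$) supports $Q$ along a face containing $F$; equivalently, $v$ lies in the cone $\cone\{\nu_{G} : G \text{ a facet of } Q,\ F\subseteq G\}$ generated by the outer normals of the facets through $F$ (the standard description of the normal cone of $F$). By Step~1 no $\nu_{G}$ is horizontal, so a nonzero nonnegative combination of these vectors whose last coordinate is $0$ must involve a summand with positive last coordinate and a summand with negative last coordinate; that is, $F$ is contained in at least one upper facet and in at least one lower facet of $Q$. Combining with Step~1, whatever the sign of $h_{n}$ there is a facet $G\supseteq F$ beyond which $\wh a_{n}$ lies, and the observation of the first paragraph gives that $F$ is visible from $\wh a_{n}$.

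The heart of the matter is Step~2: although an equatorial face of an arbitrary lifting need not lie in the upper (or lower) envelope, here general position forbids vertical facets, which both makes ``upper/lower'' well defined and pins an equatorial face between facets of both orientations --- exactly what lets one play the unknown sign of $h_{n}$ against the orientation of a facet containing $F$. The likely point of friction is making sure this really only uses general position of $A$ (to exclude vertical facets) together with condition~(i) (to place $\wh a_{n}$ strictly beyond every facet of one orientation); the remaining ingredients --- the facet-normal description of normal cones, the visibility observation, and the genericity of $\wh A$ --- are routine.
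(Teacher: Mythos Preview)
The paper states Lemma~\ref{lem:visibleequator} without proof, so there is no argument to compare against. Your proof is correct and is the natural elementary argument.

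Two small remarks. First, you tacitly assume $Q=\conv(\wh A\setminus\wh a_n)$ is a full-dimensional $(d{+}1)$-polytope, which needs $n\geq d+3$; when $n=d+2$ the set $\wh A\setminus\wh a_n$ is a $d$-simplex whose affine hull is a non-vertical hyperplane (its vertices project to the affinely independent $a_1,\dots,a_{d+1}$), and condition~\ref{it:heights} puts $\wh a_n$ strictly on one side of it, so every face is trivially visible. Second, the normal-cone identity you use, $N(F)=\cone\{\nu_G:G\text{ a facet},\,F\subseteq G\}$, is standard but worth citing or stating explicitly, since it is the crux of Step~2.
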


The following lemma is the link with the results presented in~\cite{Padrol2013}. 
The oriented matroid of a \lex-lifting of $A$ is completely determined by that of $A$ and
the signs of the heights. Indeed, it can be described using \defn{lexicographic extensions},
for which we refer to~\cite[Section 7.2]{OrientedMatroids1993} (see also~\cite[Section 4.1]{Padrol2013}).

\begin{lemma}\label{lem:om}
A \lex-lifting of $A$ with heights $h_i$ realizes the dual oriented matroid of a lexicographic extension of the Gale dual of~$A\setminus \{a_n\}$ with signature $[a_{{n-1}}^{-\signe(h_{{n}})\signe(h_{{n-1}})},\dots,a_{{d+2}}^{-\signe(h_{{n}})\signe(h_{{d+2}})}]$ .
\end{lemma}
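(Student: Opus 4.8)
The plan is to pass to Gale duals. Write $A':=A\setminus\{a_n\}=\{a_1,\dots,a_{n-1}\}$ and, for a point configuration $B$, let $\mathcal{M}(B)$ denote the oriented matroid of its homogenization, so that $\Gale{B}$ realizes $\mathcal{M}(B)^{*}$. The homogenizations of $\wh A$ and of $A'$ have ranks $d+2$ and $d+1$, on $n$ and $n-1$ elements, so $\Gale{\wh A}$ and $\Gale{A'}$ both have rank $n-d-2$, on $n$ and $n-1$ elements. The first step is therefore to show that $\Gale{\wh A}$ is a single-element extension of $\Gale{A'}$ on the new element $a_n$, equivalently that $\mathcal{M}(\wh A)/a_n=\mathcal{M}(A')$: the contraction $\mathcal{M}(\wh A)/a_n$ is the oriented matroid of the projection of $\wh a_1,\dots,\wh a_{n-1}$ away from $\wh a_n$, and since $n\geq d+2$, condition~\ref{it:heights} of Definition~\ref{def:lexicographiclifting} places $\wh a_n$ beyond every hyperplane spanned by $\wh a_1,\dots,\wh a_{n-1}$, which is exactly what forces this projection to agree, at the level of all relevant determinant signs, with forgetting the height coordinate.

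Once $\Gale{\wh A}$ is known to be a single-element extension of $\Gale{A'}$, it is determined by its localization, a sign assigned to every cocircuit of $\Gale{A'}$. By Gale duality the cocircuits of $\Gale{A'}$ are the circuits of $\mathcal{M}(A')$; a circuit in general position has support a set $C\subseteq\{1,\dots,n-1\}$ of size $d+2$, and because $|C|=d+2$ its maximum $m:=\max C$ automatically lies in $\{d+2,\dots,n-1\}$ --- which is exactly the range of indices appearing in the claimed signature. As in the contraction above, the circuit $C$ of $\mathcal{M}(A')=\mathcal{M}(\wh A)/a_n$ is the restriction to $\{1,\dots,n-1\}$ of the unique circuit $C\cup\{n\}$ of $\mathcal{M}(\wh A)$, and the localization value at $a_n$ on this cocircuit is the sign of $a_n$ in that circuit, read with the orientation compatible with the chosen orientation of $C$.

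To evaluate that sign I would use Cramer's rule, which expresses the coefficient of $a_n$ in the circuit on $C\cup\{n\}$ as $\pm\det[\wh{\bar a}_i:i\in C]$ and the coefficient of $a_m$ as $\pm\det[\wh{\bar a}_i:i\in(C\cup\{n\})\setminus\{m\}]$, where $\wh{\bar a}_i=(a_i,h_i,1)$. Expanding the first of these along the height row shows that, up to a nonzero scalar, it equals $\sum_{i\in C}\mu_i h_i$ with $(\mu_i)$ the coefficients of the original circuit $C$; but it also records on which side of $\aff(\wh a_i:i\in C\setminus\{m\})$ the point $\wh a_m$ lies, and since every index in $C\setminus\{m\}$ is smaller than $m\geq d+2$, condition~\ref{it:heights} pins this down to the $\signe(h_m)$-side. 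Symmetrically, the second determinant records the side of $\wh a_n$ relative to the same hyperplane $\aff(\wh a_i:i\in C\setminus\{m\})$, which by condition~\ref{it:heights} applied to $\wh a_n$ is the $\signe(h_n)$-side. Combining the two, the sign of $a_n$ in the circuit on $C\cup\{n\}$, measured against the reference orientation carrying the sign of $m$ in $C$, comes out to be $-\signe(h_n)\signe(h_m)$ times the sign of $m$ in $C$ --- which is precisely the localization of the lexicographic extension with signature $[a_{n-1}^{-\signe(h_n)\signe(h_{n-1})},\dots,a_{d+2}^{-\signe(h_n)\signe(h_{d+2})}]$, since the lexicographic rule only sees the entry of largest index present in the support. (Alternatively, once the ``beyond'' structure is isolated one may invoke the beyond-placement/lexicographic-extension dictionary of \cite[Section~7.2]{OrientedMatroids1993} and \cite[Section~4.1]{Padrol2013}, leaving only the signs to be pinned down; the degenerate case $n=d+2$, with empty signature, is immediate.)

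The step I expect to be the main obstacle is this last sign bookkeeping: aligning the reference orientations of the circuits of $\mathcal{M}(\wh A)$, of the cocircuits of $\Gale{A'}$, and of the ``above/below'' convention of Definition~\ref{def:lexicographiclifting} so that the product of determinant signs collapses cleanly to $-\signe(h_n)\signe(h_i)$, with the correct global minus sign and with the position-dependent powers of $-1$ (together with the $(-1)^{d}$ from expanding along the height row) all cancelling. Everything else is routine oriented-matroid duality.
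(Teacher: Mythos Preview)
The paper does not actually prove this lemma; it only states it and refers to \cite[Section~7.2]{OrientedMatroids1993} and \cite[Section~4.1]{Padrol2013} for the dictionary between \lex-liftings and lexicographic extensions of the Gale dual. Your proposal follows exactly the route those references take---dualize, show $\Gale{\wh A}\setminus a_n=\Gale{A'}$ via $\mathcal{M}(\wh A)/a_n=\mathcal{M}(A')$, then identify the localization of the extension circuit by circuit---so there is nothing to compare at the level of strategy.

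Your worry about the sign bookkeeping is largely unfounded, because the situation is more symmetric than you fear. Since $m=\max C$ and $n>m$, the indices $m$ and $n$ occupy adjacent positions $d+2$ and $d+3$ in $C\cup\{n\}$, so Cramer's rule gives
\[
\frac{\lambda_n}{\lambda_m}\;=\;-\,\frac{\det\bigl[\wh{\bar a}_j:j\in C\bigr]}{\det\bigl[\wh{\bar a}_j:j\in (C\cup\{n\})\setminus\{m\}\bigr]}.
\]
Both determinants measure a single point ($\wh a_m$, respectively $\wh a_n$) against the \emph{same} hyperplane $\aff(\wh a_j:j\in C\setminus\{m\})$, with that point sitting in the \emph{same} column, so the orientation fudge factors are identical and the ratio of determinants has sign $\signe(h_m)\signe(h_n)$ by condition~\ref{it:heights}; no stray powers of $(-1)$ survive. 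The same mechanism cleanly handles your Step~1: condition~\ref{it:heights} for $i=n$ says precisely that $\chi_{\wh A}(i_1,\dots,i_{d+1},n)$ equals, up to the global factor $\signe(h_n)$, the chirotope value $\chi_{A'}(i_1,\dots,i_{d+1})$, because the ``above/below'' convention is defined via the height component of the normal, which is that very determinant.
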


This in particular implies the following result, whose proof we omit (see~\cite[Section 4.1]{Padrol2013}).
\begin{corollary}\label{cor:combinatorialtypelifting}
If $\wh A$ and $\wh A'$ are two \lex-liftings of $A$ with respective heights $h_ i$ and $h_i'$, and such that $h_ih_i'>0$ for all $1\leq i\leq n$, then $\wh A$ and $\wh A'$ have the same oriented matroid. In particular, $\conv(\wh A)$ and $\conv(\wh A')$ are combinatorially equivalent.
\end{corollary}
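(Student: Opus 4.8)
The plan is to reduce everything to Lemma~\ref{lem:om}, which already exhibits the oriented matroid of a \lex-lifting $\wh A$ of $A$ as a function of only two inputs: the oriented matroid of $A\setminus\{a_n\}$ (packaged as its Gale dual) and the tuple of signs $\bigl(\signe(h_n),\signe(h_{n-1}),\dots,\signe(h_{d+2})\bigr)$, which enters through the signature $\sigma=[a_{n-1}^{-\signe(h_n)\signe(h_{n-1})},\dots,a_{d+2}^{-\signe(h_n)\signe(h_{d+2})}]$ of a lexicographic extension. So the first step is the essentially trivial observation that the hypothesis $h_ih_i'>0$ for all $1\le i\le n$ says precisely that $\signe(h_i)=\signe(h_i')$ for every $i$; in particular the signatures $\sigma$ and $\sigma'$ that Lemma~\ref{lem:om} attaches to $\wh A$ and to $\wh A'$ coincide.

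The second step is to invoke the standard fact that a lexicographic extension of a fixed oriented matroid by a fixed signature is uniquely determined, since it is prescribed by an explicit rule on cocircuits and is independent of any realization (see \cite[Section~7.2]{OrientedMatroids1993}, and \cite[Section~4.1]{Padrol2013} for this exact setup). Applying this to the common Gale dual of $A\setminus\{a_n\}$ with the common signature $\sigma=\sigma'$, the two lexicographic extensions are the same oriented matroid; dualizing as in Lemma~\ref{lem:om}, $\wh A$ and $\wh A'$ realize the same oriented matroid. The third step records the ``in particular'': the face lattice of the convex hull of a point configuration is an invariant of its oriented matroid (the faces of $\conv(\wh A)$ are exactly the zero sets of the non-negative covectors of the oriented matroid, and these covectors are read off from the chirotope), so $\conv(\wh A)$ and $\conv(\wh A')$ have the same face lattice.

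Given Lemma~\ref{lem:om} there is essentially no obstacle left — the real work has been absorbed into that lemma and into the theory of lexicographic extensions. If one instead wanted a proof not passing through Gale duality, the crux would be the following determinantal fact. Writing the chirotope of $\wh A$ as $\chi(i_0,\dots,i_{d+1})=\signe\det(\wh a_{i_0},\dots,\wh a_{i_{d+1}})$ with the columns homogenized, and expanding along the row of heights, one observes that the largest index $m:=i_{d+1}$ always satisfies $m\ge d+2$; condition~\ref{it:heights} of Definition~\ref{def:lexicographiclifting} then forces the $h_m$-term to dominate all others whenever the complementary $(d+1)$-minor, whose sign is the chirotope value $\chi_A(i_0,\dots,i_d)$ of $A$, is non-zero, so that $\chi(i_0,\dots,i_{d+1})=\signe(h_m)\cdot\chi_A(i_0,\dots,i_d)$ in that case. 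When that minor vanishes one recurses on the next largest index (this is precisely the recursive structure of a lexicographic extension, and is the only genuinely delicate point when $A$ fails to be in general position). Either way the outcome is that $\chi$ depends only on the chirotope of $A$ and on the vector $(\signe(h_1),\dots,\signe(h_n))$, which is the heart of the statement and immediately yields the corollary.
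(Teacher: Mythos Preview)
Your argument is correct and is exactly the derivation the paper has in mind: the paper omits the proof, stating only that the corollary follows from Lemma~\ref{lem:om} (with a pointer to \cite[Section~4.1]{Padrol2013}), and you have spelled out precisely that implication---same signs give the same signature, lexicographic extensions of a fixed oriented matroid by a fixed signature are unique, and the face lattice is an oriented matroid invariant. Your additional determinantal sketch is a pleasant self-contained alternative that bypasses Gale duality, but it is extra; the first three steps already match the paper's intended route.
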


\subsection{Placing triangulations}

The combinatorics of the convex hulls of \lex-liftings are easily explained in terms of
\defn{lexicographic triangulations}. We refer to~\cite[Section
4.3]{DeLoeraRambauSantosBOOK} for a detailed presentation, and we will only
present here the parts that will be directly useful for us. Namely, \defn{placing
triangulations} and their relation to positive \lex-liftings.
Here, we say that a face of a triangulation of $A$ is \defn{visible} if it is 
contained in a visible face of $\conv(A)$.

\begin{lemma}[{\cite[Lemma~4.3.2]{DeLoeraRambauSantosBOOK}}]\label{lem:placing}
Let $A=\{a_1,\dots,a_n\}$ be a point configuration and let $\cT$ be a triangulation of the point configuration $A\setminus a_n$. 
Then there is a triangulation $\cT'$ of $A$ whose cells are 
\[\cT': = \cT\cup \set{\conv(B\cup a_n)}{B\text{ is a face of }\cT\text{ visible from }a_n}.\]
Moreover, $\cT'$ is the only triangulation of $A$ that contains all the cells of $\cT$.
\end{lemma}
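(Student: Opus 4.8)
This is the standard ``placing'' construction (it is \cite[Lemma~4.3.2]{DeLoeraRambauSantosBOOK}); here is the proof I would write. Put $Q=\conv(A\setminus a_n)$ and $P=\conv(A)$. If $a_n\in Q$ then no face of $Q$ is visible from $a_n$, so $\cT'=\cT$ and there is nothing to prove; assume $a_n\notin Q$ and let $V\subseteq\partial Q$ be the union of the (closed) facets of $Q$ visible from $a_n$. I would first record two standard facts: (a) a face of $\cT$ is visible from $a_n$ exactly when it is contained in $V$; and (b) the beneath--beyond decomposition $P=Q\cup\conv(\{a_n\}\cup V)$ holds with $Q\cap\conv(\{a_n\}\cup V)=V$, and $P$ is star-shaped with apex $a_n$ over this new chunk. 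Both follow from the supporting-hyperplane inequalities of the facets of $Q$, using that $a_n$ lies strictly beyond precisely the facets making up $V$.

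For existence I would check that $\cT'$ covers $P$ and is face-to-face. Covering: the cells of $\cT$ cover $Q$; the faces of $\cT$ contained in $V$ form a triangulation of $V$ (they refine the visible-facet decomposition of $\partial Q$), and coning these from $a_n$ covers $\conv(\{a_n\}\cup V)$; each cone cell $\conv(\{a_n\}\cup B)$ is a genuine simplex because $a_n$ does not lie in the affine hull of any facet of $Q$ contained in $V$. Face-to-face: two cells of $\cT$ intersect well by hypothesis; for $\tau\in\cT$ and $\conv(\{a_n\}\cup B)$ one gets $\tau\cap\conv(\{a_n\}\cup B)=\tau\cap B$ (because $\conv(\{a_n\}\cup B)\cap Q=B$), which is a common face of $\tau$ and $B$ in $\cT$; and for two cone cells, star-convexity with apex $a_n$ gives $\conv(\{a_n\}\cup B)\cap\conv(\{a_n\}\cup B')=\conv(\{a_n\}\cup(B\cap B'))$, the cone over the common face $B\cap B'$. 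Hence $\cT'$ is a triangulation of $A$.

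For uniqueness, let $\cT''$ be any triangulation of $A$ with $\cT\subseteq\cT''$. Any full-dimensional cell of $\cT''$ whose interior meets $\intr Q$ must coincide with a cell of $\cT$ (as $\cT$ covers $Q$ and $\cT\subseteq\cT''$ is face-to-face), so the full-dimensional cells of $\cT''$ lying in $Q$ are precisely those of $\cT$; every other full-dimensional cell $\sigma$ of $\cT''$ then has $\intr\sigma\subseteq P\setminus Q\subseteq\conv(\{a_n\}\cup V)$, which forces $a_n$ to be a vertex of $\sigma$ (else $\sigma\subseteq Q$). Writing $\sigma=\conv(\{a_n\}\cup F)$ with $F$ the facet opposite $a_n$, one shows $F\subseteq V$ by ruling out, via the local structure of $P$ around $F$ (using that $\sigma$ is full-dimensional and that its $a_n$-side pokes out of $Q$), the possibilities that $F$ lies on a hidden facet of $Q$ or on a ``new'' facet of $P$; hence $F$ is a facet of a cell of $\cT$ contained in $V$, i.e.\ a face of $\cT$ visible from $a_n$, and $\sigma\in\cT'$. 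Conversely, no cone cell $\conv(\{a_n\}\cup B)$ of $\cT'$ over a visible facet can be omitted from $\cT''$, since its interior lies in $\conv(\{a_n\}\cup V)\setminus Q$ and is disjoint from every other cone cell's interior and from $Q$, so $\cT''$ itself must cover it. Thus $\cT'\subseteq\cT''$ and the two share the same full-dimensional cells, so $\cT''=\cT'$.

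The main obstacle is the uniqueness step, and within it the claim $F\subseteq V$: one must show that a ``new'' full-dimensional cell of $\cT''$ cannot cone over a face of $\cT$ sitting in the interior of $Q$ or on the hidden part of $\partial Q$, nor overhang a new facet of $P$. The clean way to handle this is to use star-shapedness fully --- every point of $\intr P\setminus Q$ lies on a unique open segment $(a_n,q)$ with $q$ in the relative interior of a visible facet of $Q$ --- and to combine it with the fact that $\cT\subseteq\cT''$ already pins down the triangulation induced on $V$ to be $\cT|_V$. Everything else is routine checking with the facet inequalities.
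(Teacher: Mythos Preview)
The paper does not give its own proof of this lemma; it is simply quoted from \cite[Lemma~4.3.2]{DeLoeraRambauSantosBOOK} and used as a black box. Your write-up is the standard beneath--beyond argument that the cited reference gives, and it is correct.

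One spot could be tightened. In the uniqueness step you flag the inclusion $F\subseteq V$ as the main obstacle and describe it somewhat informally. The clean argument you hint at in your last paragraph actually suffices and can replace the hand-wavy ``ruling out'' passage: for $x\in\relint F$ the open segment $(a_n,x)$ lies in $\intr\sigma\subseteq P\setminus Q$, while $x\in Q$; convexity of $Q$ then forces $[a_n,x]\cap Q=\{x\}$, so $x$ is visible from $a_n$ and hence $x\in V$. Taking closures gives $F\subseteq V$. Finally, $F$ is a face of $\cT''$ contained in $Q$, and since $\cT\subseteq\cT''$ already covers $Q$ face-to-face, $F$ must be a face of some cell of $\cT$; thus $F$ is a visible face of $\cT$ and $\sigma\in\cT'$. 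With this in place the uniqueness argument is complete.
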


\begin{definition}
The \defn{placing triangulation} of $A$ (with respect to the order induced by the labels) 
is the triangulation $\cT_n$ obtained iteratively as follows: $\cT_1$ is the trivial triangulation 
of $\{a_{{1}}\}$ and $\cT_i$ is the unique triangulation of $\{a_{{1}},\dots, a_{{i}}\}$ that contains $\cT_{i-1}$.
\end{definition}

\begin{lemma}[{\cite[Lemma~4.3.4]{DeLoeraRambauSantosBOOK}}]\label{lem:placinglifting}
Let $\wh A\subset \RR^{d+1}$ be a positive \lex-lifting of a configuration
$A\subset\RR^d$. Then the lower envelope of $\conv(\wh A)$
is combinatorially equivalent (as a simplicial complex) to the placing
triangulation of~$A$. 
\end{lemma}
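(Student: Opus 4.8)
The plan is to argue by induction on $n$, using the uniqueness clause of Lemma~\ref{lem:placing} to pin down the projected lower envelope. Write $\pi\colon\RR^{d+1}\to\RR^d$ for the projection forgetting the last coordinate. For any finite $C\subset\RR^{d+1}$, the map $\pi$ carries the lower envelope of $\conv(C)$ bijectively and inclusion-preservingly onto a regular subdivision of $\conv(\pi(C))$ all of whose vertices belong to $\pi(C)$, and this subdivision is a triangulation as soon as $C$ is in general position. Since \lex-liftings preserve general position (and we may assume $A$ is in general position, as the statement concerns simplicial complexes), $\wh A$ is in general position, so $\pi$ identifies the lower envelope $\cL$ of $\conv(\wh A)$ with a triangulation $\cT^{*}$ of $A$. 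It then suffices to prove $\cT^{*}=\cT_n$. When $n\le d+1$ this is immediate, since $\conv(\wh A)$ is a simplex and both $\cL$ and $\cT_n$ are the full simplex.

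For the inductive step set $\wh A'=\wh A\setminus\wh a_n$ and $A'=A\setminus a_n$, so that $\wh A'$ is a positive \lex-lifting of $A'$ and, by induction, $\pi$ carries the lower envelope of $\conv(\wh A')$ onto $\cT_{n-1}$. The heart of the argument is the claim that every lower facet of $\conv(\wh A')$ is still a lower facet of $\conv(\wh A)$ (when $n=d+2$ this is to be read as: the facet of $\conv(\wh A)$ opposite $\wh a_n$, which is all of $\conv(\wh A')$, is a lower facet). To see it, let $F$ be a lower facet of $\conv(\wh A')$ and write $\aff(F)=\set{x\in\RR^{d+1}}{\sprod{x}{v}=c}$ with $v$ having positive last coordinate. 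Since $F$ is a \emph{lower} facet, the outer normal of $\conv(\wh A')$ along $F$ points downward, so $\conv(\wh A')\subseteq\set{x}{\sprod{x}{v}\ge c}$. On the other hand $\aff(F)$ is a hyperplane spanned by points of $\wh A'$, so condition~\ref{it:heights} of Definition~\ref{def:lexicographiclifting}, together with $h_n>0$ (valid because $\wh A$ is positive and condition~\ref{it:heights} rules out $h_i=0$ for $i\ge d+2$), forces $\wh a_n$ to lie strictly above $\aff(F)$, i.e.\ $\sprod{\wh a_n}{v}>c$. Hence $\wh a_n$ lies on the same side of $\aff(F)$ as $\conv(\wh A')$; consequently $F$ is not visible from $\wh a_n$, so $F$ survives as a facet of $\conv(\wh A)$, and with unchanged (downward) normal, hence as a lower facet. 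Therefore the lower envelope of $\conv(\wh A')$ is a subcomplex of $\cL$, and so $\cT^{*}$ contains every cell of $\cT_{n-1}$.

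To conclude, recall from the ``moreover'' clause of Lemma~\ref{lem:placing} that $\cT_n$ is the \emph{only} triangulation of $A$ containing all cells of $\cT_{n-1}$. Since $\cT^{*}$ is a triangulation of $A$ containing all cells of $\cT_{n-1}$, we obtain $\cT^{*}=\cT_n$; together with the identification of $\cL$ with $\cT^{*}$ noted at the outset, this is exactly the assertion of the lemma.

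I expect the main obstacle to be the claim in the inductive step that inserting $\wh a_n$ destroys no lower facet of $\conv(\wh A')$: this is precisely where the ``sufficiently large height'' built into condition~\ref{it:heights} is indispensable, and one must be careful that $\wh a_n$ sits above --- never below --- every hyperplane through the earlier lifted points. The remaining ingredients are routine: that all complexes in sight are genuinely simplicial (guaranteed by general position, which \lex-liftings propagate from $A$ to $\wh A$), and that it does no harm if $\wh a_n$ fails to become a vertex of $\cL$, since the identification $\cT^{*}=\cT_n$ is forced by the uniqueness clause regardless of whether $a_n$ appears in $\cT_n$.
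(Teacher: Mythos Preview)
The paper does not provide its own proof of this lemma; it is quoted verbatim from \cite[Lemma~4.3.4]{DeLoeraRambauSantosBOOK} and used as a black box. Your argument is correct and is essentially the standard proof: induct on $n$, use positivity of $h_n$ together with condition~\ref{it:heights} to show that adding $\wh a_n$ cannot destroy any lower facet of $\conv(\wh A\setminus\wh a_n)$, and then invoke the uniqueness clause of Lemma~\ref{lem:placing}. The only mild caveat is that you tacitly assume $A$ is in general position to ensure the projected lower envelope is a triangulation rather than a coarser subdivision; this is harmless here since every application in the paper has $A$ in general position, but the book statement is slightly more general.
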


\section{The construction}
Our construction is based on the following results, which show how certain
triangulations of certain $K$-liftings are always $K$-Delaunay
(Proposition~\ref{prop:delaunay}) and neighborly
(Proposition~\ref{prop:neighborly}). 

The proof of Proposition~\ref{prop:delaunay} is inspired by \cite[Proposition 1.3.1]{Gonskaphd} and \cite[Proposition 17]{GonskaZiegler2013}, where a similar argument is used to prove that the cyclic polytope is inscribable. 
\begin{proposition}\label{prop:delaunay}
Let $K$ be an \sscb-body in $\RR^{d+2}$, $\csp$ a $K$-stereographic
projection, $A=\{a_1,\dots,a_n\}$ a configuration of $n\geq d+2$
labeled points in general position in~$\RR^d$ 
 and  $\wh A$ a $K$-lifting of $A$. Then the $K$-Delaunay triangulation
$\cT:=\Del_K(\wh A)$ coincides with the placing triangulation of $\wh A$.
\end{proposition}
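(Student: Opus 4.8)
The plan is to prove the statement by induction on $n$, adding the points one at a time in the order of the labeling, and at each step comparing how the placing triangulation grows (via Lemma~\ref{lem:placing}) with how the $K$-Delaunay triangulation changes. The key technical device will be the correspondence supplied by Lemma~\ref{lem:KdelaunayKinscribed}: we pass to $\RR^{d+3}$ by choosing a $K'$-stereographic projection for a suitable $(d+3)$-dimensional \sscb-body $K'$ whose $K'$-circumspheres restrict to the $K$-spheres of $\csp$, so that the $K$-Delaunay triangulation of $\wh A$ becomes the lower-envelope part (item~\ref{it:lowDel}) of the $K'$-inscribed polytope $\conv(\{\np\}\cup \isp(\wh A))$. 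Concretely, though, it is cleaner to argue intrinsically in $\RR^{d+1}$ directly with $K$-circumballs.

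First I would set up the induction. For the base case $n=d+2$, both triangulations are the unique triangulation of $d+2$ points in general position in $\RR^{d+1}$ (either a simplex plus one point inside, or two overlapping simplices), so there is nothing to prove; in fact one can even start from $n=d+1$. For the inductive step, write $\wh A = \wh A' \cup \{\wh a_n\}$ where $\wh A'=\{\wh a_1,\dots,\wh a_{n-1}\}$, which is itself a $K$-lifting of $A'=A\setminus a_n$. By the inductive hypothesis, $\Del_K(\wh A')$ equals the placing triangulation $\cT_{n-1}$ of $\wh A'$. By Lemma~\ref{lem:placing}, the placing triangulation $\cT_n$ of $\wh A$ is obtained from $\cT_{n-1}$ by adjoining the cones $\conv(B\cup \wh a_n)$ over the faces $B$ of $\cT_{n-1}$ that are visible from $\wh a_n$, and it is the unique triangulation of $\wh A$ containing $\cT_{n-1}$. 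So it suffices to show that $\Del_K(\wh A)$ contains all of $\cT_{n-1}=\Del_K(\wh A')$: uniqueness in Lemma~\ref{lem:placing} then forces $\Del_K(\wh A)=\cT_n$.

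The heart of the argument is therefore: \emph{every $K$-Delaunay cell of $\wh A'$ is still a $K$-Delaunay cell of $\wh A$}. Let $S\subseteq \wh A'$ be such a cell, certified by a $K$-sphere $\Sigma$ through $S$ with all other points of $\wh A'$ strictly outside the corresponding $K$-ball. We must modify $\Sigma$ (if necessary) into a $K$-sphere that still has $S$ on it and now also excludes $\wh a_n$. Here I would split into two cases according to the sign of $h_n$. If $h_n<0$, then by condition~\ref{it:heights} the point $\wh a_n$ lies very far below the hyperplane spanned by any $d+1$ points of $\wh A'$, in particular below the affine span of $S$; combined with condition~\ref{it:circumspheres} (which controls $\wh a_n$ versus the finitely many $K$-circumballs spanned by points of $\wh A'$) and the fact that $S$ is already a lower face of $\Del_K(\wh A')$, I would argue that $\Sigma$ itself, or a small tilt of it keeping $S$ on it, already excludes $\wh a_n$. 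If $h_n>0$, then $\wh a_n$ is far above every such hyperplane, so it lies outside every $K$-ball whose boundary $K$-sphere is ``not too curved'', and again condition~\ref{it:circumspheres} rules out the finitely many bad $K$-circumballs; tilting $\Sigma$ slightly in the direction away from $\wh a_n$ while keeping $S$ on it produces the required certificate. In both cases the existence of the tilt uses that $S$ is a simplex (so its affine span has codimension one in the hyperplane through the $K$-circumsphere of $S$, leaving a one-parameter family of $K$-spheres through $S$) together with the strict convexity and smoothness of $K$, which guarantee that the family of $K$-balls through $S$ sweeps monotonically.

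The step I expect to be the main obstacle is precisely this tilting argument — making rigorous the claim that, given the ``largeness'' provided by conditions~\ref{it:heights} and~\ref{it:circumspheres}, one can always re-route the empty $K$-sphere of an old Delaunay cell so as to also avoid the newly placed point $\wh a_n$. The cleanest way to handle it is probably to lift the whole picture one dimension higher via Lemma~\ref{lem:KdelaunayKinscribed}: there, $K$-Delaunay cells of $\wh A$ correspond to lower faces of a $(d+2)$-polytope $\conv(\{\np\}\cup\isp(\wh A))$, $\wh a_n$ is the last-placed point, and conditions~\ref{it:heights}--\ref{it:circumspheres} say exactly that $\isp(\wh a_n)$ is placed ``beyond'' all the relevant faces — so the statement reduces to the elementary fact that placing a new vertex never destroys an existing lower face that stays below it, which is the content of Lemma~\ref{lem:placing} applied upstairs. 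I would then transfer this back down through the projection. The remaining details (that the $K$-lifting conditions translate into the required placement-beyond conditions upstairs, and that general position is preserved) are routine given Lemmas~\ref{lem:Kprojectionmatroid} and~\ref{lem:KdelaunayKinscribed}.
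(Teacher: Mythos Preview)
Your inductive framework is exactly the paper's: reduce to showing that every $K$-Delaunay cell of $\wh A'=\wh A\setminus\wh a_n$ remains a $K$-Delaunay cell of $\wh A$, and then invoke the uniqueness clause of Lemma~\ref{lem:placing}. But what you call ``the main obstacle'' is not an obstacle at all, and the tilting argument, the case split on the sign of $h_n$, and the detour through a $(d+3)$-dimensional body are all unnecessary.

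The point you are overlooking is that condition~\ref{it:circumspheres} in Definition~\ref{def:lexicographiclifting} already \emph{is} the statement you need. A full-dimensional $K$-Delaunay cell $S$ of $\wh A'$ is a $(d{+}1)$-simplex in $\RR^{d+1}$; the $K$-sphere witnessing that it is empty is therefore unique --- it is the $K$-circumsphere of $S$, one of the finitely many $K$-circumspheres spanned by points of $\wh A'$. Condition~\ref{it:circumspheres} says precisely that $\wh a_n$ lies outside the corresponding $K$-circumball. Hence the \emph{same} $K$-sphere $\Sigma$ still certifies $S$ as a $K$-Delaunay cell of $\wh A$; there is nothing to modify. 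Once the maximal cells of $\cT_{n-1}$ survive in $\Del_K(\wh A)$, so do all their faces, and Lemma~\ref{lem:placing} finishes the proof. The paper's argument is accordingly four sentences long. You mention condition~\ref{it:circumspheres} in passing, but treat it as a partial constraint to be combined with~\ref{it:heights} and a tilting; in fact it was chosen exactly so that this step is immediate.

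A minor correction on the base case: for $n=d+2$ the configuration $\wh A$ consists of $d+2$ points in general position in $\RR^{d+1}$, hence is the vertex set of a single simplex. Your parenthetical ``either a simplex plus one point inside, or two overlapping simplices'' is a dimension slip.
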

\begin{proof}
 The proof is by induction on $n$. If $n=d+2$ then both triangulations consist
of the single simplex spanned by $\wh A$. Otherwise, let $\cT'$ be the
$K$-Delaunay triangulation of $\wh A\setminus \wh a_{{n}}$. By induction
hypothesis, this is the placing triangulation of $\wh A\setminus \wh a_{{n}}$.
Moroever, since the $K$-lifting fulfills condition \ref{it:circumspheres},
every $K$-Delaunay cell of $\cT'$ is still a $K$-Delaunay cell of $\cT$.
Now, by Lemma~\ref{lem:placing} the placing triangulation is the
unique triangulation of $\wh A$ containing all the cells of $\cT'$.
\end{proof}

Proposition~\ref{prop:neighborly} can be deduced from the \emph{Gale sewing}
technique presented in \cite[Theorem 4.2]{Padrol2013}.
However, the original proof of the theorem
exploits Gale duality and oriented matroid theory, while this primal proof is
elementary. Moreover, this is the setting that will eventually allow us to prove
inscribability in Theorem~\ref{thm:inscribableandneighborly}.

\begin{proposition}\label{prop:neighborly}
Let $A=\{a_1,\dots,a_n\}\subset\RR^d$ be a configuration of $n\geq d+2$ labeled points 
in general position that is the set of vertices of a simplicial $d$-polytope~$P$, and let $\wh A$ be a \lex-lifting of~$A$. 
If $P$ is $k$-neighborly (as a polytope), then $\wh P:=\conv(\wh A)$ is $k$-neighborly (as a polytope) and the placing triangulation of $\wh A$ is $(k+1)$-neighborly (as a triangulation).
\end{proposition}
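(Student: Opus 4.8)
The plan is to prove the three claims in the order they are stated, using the description of a \lex-lifting as a sequence of lexicographic extensions and working with the placing triangulation. First I would reduce to the case of a positive \lex-lifting: by Corollary~\ref{cor:combinatorialtypelifting}, the combinatorial type of $\conv(\wh A)$ depends only on the signs of the heights, and the placing triangulation of $\wh A$ depends only on the order of the points (not on the heights at all), so replacing $\wh A$ by the positive \lex-lifting $\wh A^+$ with the same underlying order changes nothing in the conclusion; hence I may assume all $h_i\geq 0$. Then, by Lemma~\ref{lem:placinglifting}, the placing triangulation $\cT$ of $A$ (one dimension down — wait, here the placing triangulation I want is of $\wh A$ itself, a triangulation of the $(d{+}1)$-dimensional configuration) — more precisely I note that every facet of $\conv(\wh A)$ is either a lower facet or an upper facet, and I will handle neighborliness of the polytope $\wh P$ directly and neighborliness of the placing triangulation via Lemma~\ref{lem:placinglifting} applied to $\wh A$ itself viewed as sitting in $\RR^{d+1}$.

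For the polytope statement, the key step is: let $S\subseteq\{\wh a_1,\dots,\wh a_n\}$ with $|S|=k$; I must show $\conv(S)$ is a face of $\wh P$. Let $m$ be the largest index with $\wh a_m\in S$. If $m\leq d+1$ then $S$ spans a face trivially since $\wh a_1,\dots,\wh a_{d+1}$ are affinely independent (their projections span a simplex that is a face of $P$, being $k$-neighborly with $k\le\lfloor d/2\rfloor$, and a supporting hyperplane for that face lifts). If $m\geq d+2$, I would argue inductively: by condition~\ref{it:heights}, $\wh a_m$ is placed so far above (below in the non-positive case, but we have reduced to positive) every hyperplane through earlier points that $\wh a_m$ is ``beyond'' exactly those faces of $\conv(\wh a_1,\dots,\wh a_{m-1})$ that are visible. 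The standard placing/beyond-beneath analysis then shows that a subset $S'=S\setminus\wh a_m$ together with $\wh a_m$ is a face of $\conv(\wh a_1,\dots,\wh a_m)$ iff $\conv(S')$ is a visible (in particular, any) face of $\conv(\wh a_1,\dots,\wh a_{m-1})$; and being a face of $\conv(\wh a_1,\dots,\wh a_m)$ that involves only points of index $\le m$, it persists as a face of $\wh P$ because all later points are again placed very far and hence lie strictly on the near side of a suitable supporting hyperplane. The induction hypothesis is that $\conv(\wh a_1,\dots,\wh a_{m-1})$ — the \lex-lifting of $\{a_1,\dots,a_{m-1}\}$ — is $k$-neighborly, which holds since $\{a_1,\dots,a_{m-1}\}$ is in general position and is the vertex set of a $k$-neighborly polytope (a subpolytope of a $k$-neighborly simplicial polytope on $\ge d+2$ vertices is $k$-neighborly), so $\conv(S\setminus\wh a_m)$ of size $\le k-1\le k$ is already a face there, giving what is needed. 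The base case $m=d+1$ was handled above.

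For the triangulation statement, I would apply Lemma~\ref{lem:placinglifting} with the roles shifted: let $\wwh A\subset\RR^{d+2}$ be a positive \lex-lifting of $\wh A\subset\RR^{d+1}$; then the lower envelope of $\conv(\wwh A)$ is combinatorially the placing triangulation of $\wh A$. Now $\conv(\wwh A)$ is a \lex-lifting of $A$ (iterating the construction), and by the polytope part already proved — applied one dimension higher, noting $\wh P$ is $k$-neighborly so the hypothesis is met with $\wh A$ in place of $A$ — the polytope $\conv(\wwh A)$ is $k$-neighborly; I must upgrade this to: every $(k+1)$-subset of $\wh A$ is a cell of the placing triangulation, i.e.\ spans a face of the \emph{lower} envelope of $\conv(\wwh A)$. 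Here I use that in a positive \lex-lifting the first point $\wwh a_1$ (more robustly: the points of smallest index) can be taken low, so that faces using small indices are lower faces; precisely, any $(k+1)$-subset $T$ of $\wh A$ contains points of index at most $d+2$, and one shows by the same beyond–beneath bookkeeping that $\conv(T)$, being a face of $\conv(\wwh A)$ by $k$-neighborliness, is actually a lower face because it is visible from below (its outer normal can be pushed to have negative last coordinate using the extreme height of the largest-index point in $T$ or the lowness of $\wwh a_1,\dots,\wwh a_{d+1}$). Alternatively and more cleanly, I would cite that the placing triangulation of $\wh A$ coincides with a lexicographic (placing) triangulation of $A$ pushed up, and invoke the known fact (Gale-dual count in \cite[Theorem~4.2]{Padrol2013}) that such triangulations are $(k+1)$-neighborly whenever the ambient polytope is $k$-neighborly — but since the proposition advertises an elementary primal proof, the beyond–beneath argument is the intended route.

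\textbf{Main obstacle.} The routine but delicate part is the beyond–beneath bookkeeping: showing that a subset that spans a face of $\conv(\wh a_1,\dots,\wh a_m)$ survives as a face of $\conv(\wh A)$ after all the later extreme liftings, and — for the triangulation — that the relevant faces land in the \emph{lower} envelope rather than the upper one. The sign conventions in condition~\ref{it:heights} (``above if $h_i>0$'', ``below if $h_i<0$'') must be tracked carefully; reducing at the outset to the positive lifting (legitimate by Corollary~\ref{cor:combinatorialtypelifting} and the height-independence of placing triangulations) removes half of the case analysis and is, I expect, the cleanest way to keep the argument honest.
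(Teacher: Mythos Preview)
Your proposal has genuine gaps and misses the key idea that makes the paper's proof short.

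\medskip
\textbf{The reduction to positive liftings is invalid for the triangulation claim.} You assert that ``the placing triangulation of $\wh A$ depends only on the order of the points (not on the heights at all)''. This is false: the placing triangulation depends on the oriented matroid of $\wh A$, which by Lemma~\ref{lem:om} and Corollary~\ref{cor:combinatorialtypelifting} depends on the \emph{signs} of the $h_i$. Different sign patterns give genuinely different placing triangulations of~$\wh A$, so proving the result for $\wh A^+$ does not prove it for~$\wh A$. (Corollary~\ref{cor:combinatorialtypelifting} does let you normalize the \emph{magnitudes} once the signs are fixed, but that is not what you use.)

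\medskip
\textbf{The beyond--beneath argument for the polytope claim is incorrect.} You write that $S'\cup\{\wh a_m\}$ is a face of $\conv(\wh a_1,\dots,\wh a_m)$ ``iff $\conv(S')$ is a visible (in particular, any) face''. Neither half of this is right. When a point $p$ is placed beyond some facets of $Q$, the faces of $\conv(Q\cup p)$ containing $p$ are $p*F$ for $F$ on the \emph{horizon} (contained in both a visible and a non-visible facet), not for arbitrary faces and not for merely visible ones. In a \lex-lifting the horizon is exactly the equator, so what you actually need is that $S'$ is an \emph{equatorial} face --- and your induction hypothesis (``$k$-neighborly'') does not give you this. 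Your persistence claim (``lies strictly on the near side of a suitable supporting hyperplane'') is likewise unjustified without exhibiting such a hyperplane.

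\medskip
\textbf{What the paper does instead.} The missing idea is the word \emph{equatorial}. For the polytope claim the paper gives a one-line direct argument, no induction: if $S\subset A$ with $|S|=k$ is a face of $P$ with supporting hyperplane $H$, then the vertical cylinder $\wh H=\pi^{-1}(H)$ is a supporting hyperplane of $\wh P$ meeting $\wh A$ exactly in the lift of $S$; so every $k$-subset of $\wh A$ is an \emph{equatorial} face of $\wh P$. For the triangulation claim the paper does a straight induction on $n$: if $\wh a_n\notin S$ use the induction hypothesis; if $\wh a_n\in S$, then $S\setminus\wh a_n$ has size $k$, hence is an equatorial face of $\conv(\wh A\setminus\wh a_n)$ by the first claim, hence is visible from $\wh a_n$ by Lemma~\ref{lem:visibleequator} (which holds for \emph{any} sign of $h_n$), hence $S$ is a cell by Lemma~\ref{lem:placing}. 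No reduction to positive liftings, no double lifting to $\wwh A$, and no upper/lower envelope bookkeeping is needed.
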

\begin{proof}
The proof of the first claim ($\wh P$ is $k$-neighborly) is straightforward. 
Indeed, every subset $S$ of $k$ points of $A$ forms a face of $P$. Let $H$ be a supporting hyperplane for this face and let $\wh H$ be its preimage under the projection. 
Then $\wh H$ is a hyperplane supporting an equatorial face of $\wh P$ that contains exactly the $k$ points of $\wh A$ 
corresponding to liftings of points in $S$. The claim follows because by construction every vertex of~$\wh P$ is the lifting of a distinct vertex of~$P$.

The second claim is proved by induction on $n$, and it is trivial when $n=d+2$. For $n>d+2$, let $\cT$ be the placing triangulation of $\wh A$ and $\cT'$ the corresponding placing triangulation of $\wh A\setminus \wh a_{{n}}$. Now fix a subset $S$ of $\wh A$ of size $k+1$. If $\wh a_{{n}}\notin S$, then $S$ forms a cell of $\cT'$ by induction hypothesis, and hence of $\cT$. Otherwise, if $\wh a_{{n}}\in S$, then $S'=S\setminus \wh a_{{n}}$ must be an equatorial face of $\wh A\setminus \wh a_{{n}}$. By Lemma~\ref{lem:visibleequator}, $S'$ is visible from $\wh a_{{n}}$ and hence by the definition of the placing triangulation, $S$ must be a cell of $\cT$.
\end{proof}

The combination of these two propositions directly proves our main result
(see also Section~\ref{sec:ubtb} for the relation with the Upper Bound Theorem 
for triangulations).
\begin{theorem}\label{thm:neighborlyanddelaunay}
Let $K$ be an \sscb-body in $\RR^{d+2}$, and $\csp$ a $K$-stereographic
projection. Let $A=\{a_1,\dots,a_n\}\subset \RR^d$ be a configuration of $n\geq d+2$
labeled points in general position that is the vertex set of a $k$-neighborly
$d$-polytope $P$, and let $\wh A$ be a $K$-lifting of $A$. Then $\wh P:=\conv(\wh A)$ is
a $k$-neighborly polytope with vertex set $\wh A$ and the $K$-Delaunay
triangulation of $\wh A$ is a $(k+1)$-neighborly triangulation. 
\end{theorem}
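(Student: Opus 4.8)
The plan is to simply combine Propositions~\ref{prop:delaunay} and~\ref{prop:neighborly}, observing that the hypotheses of the theorem supply exactly what both propositions need. First I would note that $A$ is a configuration of $n \geq d+2$ labeled points in general position that is the vertex set of a $k$-neighborly $d$-polytope $P$; in particular $P$ is simplicial because general position forces every facet to be a simplex. Since $\wh A$ is a $K$-lifting of $A$, it is in particular a \lex-lifting, and by the remark after Definition~\ref{def:lexicographiclifting} a \lex-lifting of a configuration in general position (and in convex position) is again in general position and in convex position. So the combinatorial hypotheses of Proposition~\ref{prop:neighborly} are met, and I would apply it to conclude that $\wh P = \conv(\wh A)$ is $k$-neighborly with vertex set $\wh A$, and that the placing triangulation of $\wh A$ is $(k+1)$-neighborly as a triangulation.

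Next I would invoke Proposition~\ref{prop:delaunay}, whose hypotheses (that $K$ is an \sscb-body in $\RR^{d+2}$, that $\csp$ is a $K$-stereographic projection, that $A$ is in general position with $n\geq d+2$, and that $\wh A$ is a $K$-lifting of $A$) are literally those of the theorem. This gives that the $K$-Delaunay triangulation $\Del_K(\wh A)$ equals the placing triangulation of $\wh A$. Combining the two statements: $\Del_K(\wh A)$ is the placing triangulation, which by Proposition~\ref{prop:neighborly} is $(k+1)$-neighborly. The only point requiring a word of care is the compatibility of the two propositions' coordinate conventions: Proposition~\ref{prop:neighborly} is phrased for a \lex-lifting into $\RR^{d+1}$, whereas in the theorem $\wh A$ lives in $\RR^{d+1}$ too but is a $K$-lifting with $K\subset\RR^{d+2}$ (the extra dimension being absorbed by the stereographic projection that eventually inverts it). Since a $K$-lifting is by definition a \lex-lifting satisfying an extra empty-$K$-circumball condition, every statement about \lex-liftings applies verbatim, so there is no genuine mismatch.

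There is essentially no obstacle here — the theorem is a corollary of the two preceding propositions, exactly as the surrounding text advertises (``The combination of these two propositions directly proves our main result''). The only mild subtlety, if one wanted to be scrupulous, is to make sure that ``neighborly triangulation'' in the conclusion is being used with the definition from the Preliminaries: $\conv(S)$ is a cell for every $S$ with $|S| = \floor{(d+1+1)/2}$ when one passes to the relevant dimension; but since Proposition~\ref{prop:neighborly} already states its conclusion as ``$(k+1)$-neighborly as a triangulation,'' nothing further is needed. I would therefore keep the proof to two or three sentences, pointing to Propositions~\ref{prop:delaunay} and~\ref{prop:neighborly} and to the observation that a $K$-lifting is in particular a \lex-lifting and that general/convex position is preserved under \lex-lifting.
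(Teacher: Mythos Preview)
Your proposal is correct and matches the paper's approach exactly: the paper gives no separate proof for this theorem, stating only that it follows directly from the combination of Propositions~\ref{prop:delaunay} and~\ref{prop:neighborly}, which is precisely what you do (together with the observation that a $K$-lifting is in particular a \lex-lifting).
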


We can easily adapt Proposition~\ref{prop:neighborly} to obtain a statement in
terms of neighborly $K$-inscribable polytopes. To do so, we need the following
consequence of Lemma~\ref{lem:Kprojectionmatroid}, whose proof we omit since it
only needs a combinatorial description of the face lattice of a positive lifting
(cf.~\cite[Lemma~4.3.4]{DeLoeraRambauSantosBOOK}).
\begin{lemma}\label{lem:delaunayislifting}
 If the $K$-Delaunay triangulation of $A$ coincides with its placing
triangulation, then the polytope $P_A$ of Lemma~\ref{lem:KdelaunayKinscribed} is
combinatorially equivalent to the convex hull of a positive \lex-lifting of
$A\cup\{a_{n+1}\}$ for any $a_{n+1}\in\RR^d$.
\end{lemma}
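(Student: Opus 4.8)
\smallskip
\noindent\emph{Proof plan.} The plan is to write down both face lattices explicitly and match them under the identification $\np\leftrightarrow\wh a_{n+1}$ and $\isp(a_i)\leftrightarrow\wh a_i$ for $1\le i\le n$. Note first that the hypothesis --- that the $K$-Delaunay \emph{triangulation} of $A$ is defined and coincides with the placing triangulation --- forces $A$ into sufficiently general position, so that $\conv(A)$ and $P_A$ are simplicial; I work throughout under this assumption, which is what makes the statement clean.

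First I would record the combinatorics of $P_A$: combining Lemma~\ref{lem:KdelaunayKinscribed} with the hypothesis that $\Del_K(A)$ equals the placing triangulation $\cT$ of $A$, for $S\subseteq A$ the set $\conv(\isp(S))$ is a face of $P_A$ avoiding $\np$ exactly when $S$ is a face of $\cT$, and $\conv(\isp(S)\cup\{\np\})$ is a face of $P_A$ exactly when $\conv(S)$ is a face of $\conv(A)$ (allowing the empty face and the improper face $\conv(A)$ itself). As $P_A$ is simplicial, this list of faces --- as subsets of its vertex set --- is all we need.

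Then I would analyse $\conv(\wh{A'})$, where $A'=\{a_1,\dots,a_n,a_{n+1}\}$ with this ordering and $\wh{A'}=\{\wh a_1,\dots,\wh a_{n+1}\}$ is a positive \lex-lifting. The key point is that $\wh A:=\{\wh a_1,\dots,\wh a_n\}$ is itself a positive \lex-lifting of $A$, so by Lemma~\ref{lem:placinglifting} the lower envelope of $Q:=\conv(\wh A)$ is (combinatorially) the placing triangulation $\cT$ of $A$; and since $A$ is in general position, so is $\wh A$, whence $Q$ has no vertical facet hyperplane. Condition~\ref{it:heights} of Definition~\ref{def:lexicographiclifting} applied to the index $n+1$ says exactly that $\wh a_{n+1}$ lies above every (necessarily non-vertical) facet hyperplane of $Q$; unravelling signs, this means $\wh a_{n+1}$ sees precisely the upper facets of $Q$ and none of the lower ones (in particular $\wh a_{n+1}\notin Q$). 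Beneath--Beyond --- equivalently, the combinatorial description of positive \lex-liftings, \cite[Lemma~4.3.4]{DeLoeraRambauSantosBOOK} --- then describes $\conv(\wh{A'})=\conv(Q\cup\{\wh a_{n+1}\})$: a face of $Q$ survives in $\conv(\wh{A'})$ iff it lies in some lower facet of $Q$, so the faces of $\conv(\wh{A'})$ avoiding $\wh a_{n+1}$ are exactly the faces of the lower envelope of $Q$, hence the faces of $\cT$; and $\conv(F\cup\{\wh a_{n+1}\})$ is a face iff $F$ lies in both an upper and a lower facet of $Q$, i.e.\ iff $F$ is an equatorial face of $Q$. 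Finally, vertical projection is a dimension-preserving bijection between equatorial faces of $Q$ and proper faces of $\conv(A)$ --- an equatorial face of $Q$ carries a horizontal supporting normal $(v,0)$ with $v$ normal to a face of $\conv(A)$, and conversely such a $G$ lifts to the equatorial face spanned by the lifts of the vertices of $G$ --- so the faces of $\conv(\wh{A'})$ through $\wh a_{n+1}$ are exactly $\conv(\wh G\cup\{\wh a_{n+1}\})$ with $G$ a face of $\conv(A)$. This matches the face lattice of $P_A$ recorded above; as both polytopes are simplicial, the resulting vertex bijection respecting faces is a combinatorial equivalence, and it visibly does not depend on the choice of $a_{n+1}$, since $\wh a_{n+1}$ always sees exactly the upper facets of $Q$.

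The step I expect to need the most care is the Beneath--Beyond bookkeeping: correctly relating "facets of $Q$ visible from the far-off point $\wh a_{n+1}$" with "equatorial faces of $Q$" and then with "proper faces of $\conv(A)$", including the boundary cases --- the empty face yielding the vertex $\{\wh a_{n+1}\}$; the horizon ridges of $Q$ having to biject with the facets of $\conv(A)$; and checking that a horizontal supporting normal of a face of $Q$ genuinely witnesses a proper face of $\conv(A)$. The general-position hypothesis, implicit in speaking of a $K$-Delaunay triangulation, is precisely what keeps everything simplicial and so keeps this step routine.
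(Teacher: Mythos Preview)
Your proof is correct and follows precisely the route the paper indicates: the paper omits the argument, saying only that it ``needs a combinatorial description of the face lattice of a positive lifting (cf.~\cite[Lemma~4.3.4]{DeLoeraRambauSantosBOOK}),'' and that is exactly what you carry out --- matching the two face lattices via Lemma~\ref{lem:KdelaunayKinscribed} on one side and the lower-envelope/Beneath--Beyond description of $\conv(\wh{A'})$ on the other, with the identification $\np\leftrightarrow\wh a_{n+1}$. Your care with the equatorial-face bookkeeping (horizon faces of $Q$ $\leftrightarrow$ faces of $\conv(A)$) is the only substantive verification needed, and you handle it correctly.
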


As a direct consequence of the combination of
Lemma~\ref{lem:KdelaunayKinscribed} and Theorem~\ref{thm:neighborlyanddelaunay},
we obtain the following result. Observe how the strategy is to start with a
$k$-neighborly $d$-polytope, lift it to a $(k+1)$-neighborly
$(d+1)$-triangulation, and lift it again (with a positive lifting with the same
order) to a $(k+1)$-neighborly $(d+2)$-polytope, which is inscribable.

\begin{theorem}\label{thm:inscribableandneighborly}

Let $A=\{a_1,\dots,a_n\}\subset \RR^d$ be a 
 configuration of $n\geq d+3$ points in general position
that is the vertex set of a $k$-neighborly $d$-polytope~$P$, let $\wh A$ be a \lex-lifting of $A$ and
let $\wwh{A}$ be a positive \lex-lifting of~$\wh A$. Then:
\begin{enumerate}[(i)]
 \item $\wwh P:=\conv(\wwh A)$ is a $(k+1)$-neighborly $(d+2)$-polytope.
In particular, if $P$ is neighborly,
so is $\wwh P$.
\item for each \sscb-body $K$ in $\RR^{d+2}$, there is a $K$-inscribed realization of (a polytope combinatorially equivalent to) $\wwh P$.
\end{enumerate}
That is, $\wwh P$ is a $K$-inscribable
$(k+1)$-neighborly $(d+2)$-polytope. 
\end{theorem}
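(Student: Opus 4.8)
The plan is to reduce both parts to results already established, using the two-step lifting strategy sketched just before the statement. For part~(i), I would combine Proposition~\ref{prop:neighborly} with itself. Apply Proposition~\ref{prop:neighborly} to $A$ and the \lex-lifting $\wh A$: since $P$ is $k$-neighborly, the placing triangulation of $\wh A$ is a $(k+1)$-neighborly triangulation. Now $\wwh A$ is a \emph{positive} \lex-lifting of $\wh A$, so by Lemma~\ref{lem:placinglifting} the lower envelope of $\wwh P = \conv(\wwh A)$ is combinatorially the placing triangulation of $\wh A$, hence every $(k+1)$-subset of $\wwh A$ lies on the lower envelope and therefore forms a face of $\wwh P$. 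To conclude that $\wwh P$ is $(k+1)$-neighborly as a \emph{polytope} I still need that every such subset is a face of $\wwh P$ itself, not merely of a triangulation of its lower envelope — but a face of the placing triangulation that is a simplex is a face of the polytope precisely because it appears in the lower envelope, which is a subcomplex of the boundary complex of $\wwh P$. (Here I use $n\geq d+3$ so that $\wh A$ has at least $d+3$ points and $\wwh A$ is genuinely $(d+2)$-dimensional.) The ``in particular'' clause is immediate: if $P$ is $\lfloor d/2\rfloor$-neighborly then $\wwh P$ is $\lfloor d/2\rfloor + 1 = \lfloor (d+2)/2\rfloor$-neighborly, i.e.\ neighborly in dimension $d+2$.

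For part~(ii), I would invoke the Delaunay--inscribed dictionary. Fix an \sscb-body $K$ in $\RR^{d+2}$ and a $K$-stereographic projection. The key point is that, by Corollary~\ref{cor:combinatorialtypelifting}, the combinatorial type of a \lex-lifting depends only on the \emph{signs} of the heights; so I may replace $\wwh A$ by any \lex-lifting of $\wh A$ with the same (all nonnegative) sign pattern without changing the combinatorial type of $\wwh P$. In particular, I am free to choose the heights of the second lifting to satisfy the extra empty-$K$-circumsphere condition~\ref{it:circumspheres}, making it a \emph{positive $K$-lifting} of $\wh A$ — condition~\ref{it:circumspheres} only forces each new height to be even larger, which is compatible with positivity. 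Likewise I may assume (again by Corollary~\ref{cor:combinatorialtypelifting}) that $\wh A$ itself is a $K$-lifting of $A$. Then by Proposition~\ref{prop:delaunay} the $K$-Delaunay triangulation of $\wh A$ coincides with its placing triangulation, and Lemma~\ref{lem:delaunayislifting} tells us that the polytope $P_{\wh A}$ of Lemma~\ref{lem:KdelaunayKinscribed} — which is $K$-inscribed by construction — is combinatorially equivalent to the convex hull of a positive \lex-lifting of $\wh A$ (after adjoining one arbitrary extra point, which I can absorb or simply note does not affect the argument if one sets up the indices so that $\wwh A$ already plays this role). Since all positive \lex-liftings of $\wh A$ are combinatorially equivalent by Corollary~\ref{cor:combinatorialtypelifting}, $P_{\wh A}$ is combinatorially equivalent to $\wwh P$. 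Hence $\wwh P$ is $K$-inscribable.

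Combining the two parts gives the final assertion that $\wwh P$ is a $K$-inscribable $(k+1)$-neighborly $(d+2)$-polytope, for every \sscb-body $K$.

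I expect the main obstacle to be the bookkeeping around Lemma~\ref{lem:delaunayislifting}: it produces the $K$-inscribed polytope as the convex hull of a positive lifting of $\wh A \cup \{a_{n+1}\}$ for an \emph{arbitrary} extra point $a_{n+1}$, whereas the statement phrases $\wwh P$ as a positive lifting of $\wh A$ on the nose. Reconciling these requires observing that the role of the apex-point $\np$ of the stereographic construction is exactly the role of the extra lifted point $a_{n+1}$ (it is the last point of the placing order, lifted very high), and that, by Corollary~\ref{cor:combinatorialtypelifting}, the precise position of $a_{n+1}$ in $\RR^d$ is irrelevant to the combinatorial type. One must also double-check that imposing condition~\ref{it:circumspheres} on the second lifting is genuinely compatible with positivity and with condition~\ref{it:heights} — it is, since both conditions are of the form ``$|h_i|$ sufficiently large'', and they can be satisfied simultaneously. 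Modulo this indexing care, the proof is a direct concatenation of Propositions~\ref{prop:delaunay} and~\ref{prop:neighborly}, Lemmas~\ref{lem:KdelaunayKinscribed} and~\ref{lem:delaunayislifting}, and Corollary~\ref{cor:combinatorialtypelifting}.
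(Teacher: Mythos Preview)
Your approach is exactly the paper's: part~(i) via Proposition~\ref{prop:neighborly} plus Lemma~\ref{lem:placinglifting}, and part~(ii) via Corollary~\ref{cor:combinatorialtypelifting}, Proposition~\ref{prop:delaunay}, and Lemma~\ref{lem:delaunayislifting}. Part~(i) is fine as written.

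In part~(ii), however, the bookkeeping you flag as ``the main obstacle'' is not resolved. If you apply Lemma~\ref{lem:delaunayislifting} to $\wh A$ itself, the resulting $K$-inscribed polytope $P_{\wh A}$ has $n+1$ vertices (the $n$ inverse images $\isp(\wh a_i)$ together with the apex~$\np$), whereas $\wwh P$ has only~$n$. Your remark that ``$\np$ plays the role of $a_{n+1}$'' explains the correspondence but does not remove the extra vertex. The paper's fix is a one-line index shift: apply Proposition~\ref{prop:delaunay} and Lemma~\ref{lem:delaunayislifting} to $\wh A\setminus\wh a_n$ rather than to $\wh A$. Then $P_{\wh A\setminus \wh a_n}$ has $n$ vertices and, by Lemma~\ref{lem:delaunayislifting}, is combinatorially a positive \lex-lifting of $(\wh A\setminus\wh a_n)\cup\{\text{any point}\}$; choosing that point to be $\wh a_n$ recovers a positive \lex-lifting of $\wh A$, which by Corollary~\ref{cor:combinatorialtypelifting} is $\wwh P$.

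A smaller point: your paragraph about imposing condition~\ref{it:circumspheres} on the \emph{second} lifting is both unnecessary and ill-typed. A $K$-lifting with $K\subset\RR^{d+2}$ goes from $\RR^d$ to $\RR^{d+1}$, so it is $\wh A$ (not $\wwh A$) that must be taken to be a $K$-lifting; the second lift plays no role in the inscribed realization, since it is replaced wholesale by the stereographic construction $P_{\wh A\setminus\wh a_n}$. Once you make this index shift and drop the extraneous discussion of the second lifting, your argument coincides with the paper's.
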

\begin{proof}
Recall that the oriented matroid of $\wh A$ and $\wwh A$, as well as the combinatorial type
of $\wh P$ and $\wwh P$, only depends on the signs of the heights of the \lex-liftings, by Corollary~\ref{cor:combinatorialtypelifting}.
Hence, we can take $\wh A$ to be a \lex-lifting that is also a $K$-lifting. By
Proposition~\ref{prop:delaunay}, the placing triangulation of $\wh A\setminus \wh a_n$
coincides with its $K$-Delaunay triangulation. Therefore, by
Lemma~\ref{lem:delaunayislifting}, $\wwh P$ is combinatorially equivalent
to the $K$-inscribed polytope $P_{\wh A\setminus \wh a_n}$ of
Lemma~\ref{lem:KdelaunayKinscribed}. This proves inscribability.

 For $k$-neighborliness, observe that as a consequence of  Lemma~\ref{lem:placinglifting}, 
 every face of the placing triangulation of  $\wh A$ is also a face of $\wwh P$. 
 Since the triangulation is $(k+1)$-neighborly by Proposition~\ref{prop:neighborly}, then so is $\wwh P$.
\end{proof}

Theorem~\ref{thm:inscribableandneighborly} provides the following method to
construct many $K$-inscribable neighborly polytopes with $n$
vertices starting with an arbitrary polygon or $3$-polytope.

\begin{construction}\label{cons:construction}
For a fixed \sscb-body $K\subset \RR^d$, to construct a $K$-inscribable
neighborly $d$-polytope with $n$ vertices $P$ and a $(d-1)$-dimensional set of
$n-1$ points with a neighborly $K$-Delaunay triangulation $\cT$:
\begin{enumerate}[label*=\arabic*.,ref={\arabic*}]
 \item Set $n_0:=n-2\ffloor{d-2}{2}$ and $d_0:=d-2\ffloor{d-2}{2}$
 \item Let $P_0$ be any simplicial $d_0$-polytope with $n_0$ vertices.
 \item $P_0$ is neighborly because $d_0\in\{2,3\}$.
 \item Set $A_0=\verts(P_0)$.
 \item\label{it:loop} For $i$ from $1$ to $\ffloor{d-2}{2}$ do:
 \begin{enumerate}[label*=\arabic*.,ref={\arabic*}]
      \item\label{it:reorder} Choose a permutation $\sigma\in\cS_{n_{i-1}}$ and relabel the points of $A_{i-1}$ with $a_j\mapsto a_{\sigma(j)}$.
      \item\label{it:addpoints1} Let $B_{i-1}:=A_{i-1}\cup \{a_{n_{i-1}+1}\}$ for some $a_{n_{i-1}+1}\in \RR^{d_{i-1}}$.
      \item\label{it:lifting1} Let $\wh B_{i-1}$ be a \lex-lifting of $B_{i-1}$.
      \item \label{it:addpoints2} Let $\wh C_{i-1}:=\wh B_{i-1}\cup \{\wh a_{n_{i-1}+2}\}$ for some $\wh a_{n_{i-1}+2}\in \RR^{d_{i-1}+1}$.
      \item\label{it:lifting2} Set $A_{i}:=\wwh {C}_{i-1}$ to be a positive
\lex-lifting of $\wh C_{i-1}$. 
      \item Set $n_{i}:=n_{i-1}+2$ and $d_{i}:=d_{i-1}+2$.
      \item By Theorem~\ref{thm:inscribableandneighborly}, $P_i:=\conv(A_i)$ is a neighborly $d_i$-polytope with $n_i$ vertices.
    \end{enumerate}
\item $P:=P_{\ffloor{d-2}{2}}$ is a $K$-inscribable neighborly $d$-polytope with
$n$ vertices by Theorem~\ref{thm:inscribableandneighborly}.
\item $\cT:=\Del_K(\wh A_{\ffloor{d-4}{2}})$ is a neighborly $K$-Delaunay
triangulation of a $(d-1)$-dimensional set of $n-1$ points by
Theorem~\ref{thm:neighborlyanddelaunay}.

\end{enumerate}
\end{construction}

\begin{remarks}\label{rmks:construction}
Some observations concerning Construction~\ref{cons:construction}:
\begin{enumerate}[a)]
 \item Observe that we start the construction with $n-2\ffloor{d-2}{2}$ points
instead of $n$ points. The missing points are added at steps \ref{it:loop}.\ref{it:addpoints1}
and \ref{it:loop}.\ref{it:addpoints2}. This is due to the fact that the
combinatorics of the convex hull and triangulations of a \lex-lifting are
independent of the position of the last point.
 
 \item
 We deferred the discussion about the order of the points for the \lex-lifting
until now. However, the relabeling step in Construction~\ref{cons:construction}
is crucial, since it is the choice of these permutations that produces the
variety of combinatorial types. It is also important to remark that the choice
of the permutation is only done once every two dimensions, since the liftings of steps \ref{it:loop}.\ref{it:lifting1} 
and \ref{it:loop}.\ref{it:lifting2} need to follow the same
order or otherwise Theorem~\ref{thm:inscribableandneighborly} does not hold.

\item 
If all the liftings are positive, and no relabeling is done, then this construction
yields cyclic polytopes (cf. \cite[Proposition~4.7]{Padrol2013}).
\item%
 Construction~\ref{cons:construction} relies on \lex-liftings,
which in their definition depend on some $h_i$'s being ``large enough''. One
might wonder how feasible, computationally, it is to find these $h_i's$ (at least
when $K=\ball{d}$). On the one
hand, in the case of \lex-liftings or $\ball{d}$-liftings, it is not hard to
find an upper bound for the minimal valid $h_i$ that depends only on $d$ and $n$
if we assume, for example, that the $a_i$'s have integer entries whose absolute
value is bounded by some number $M$. Indeed, the conditions required in
Definition~\ref{def:lexicographiclifting} only depend on certain determinants
being positive. However, if we apply these bounds to construct our point
configurations from scratch, we will end up with points having extremely
 large coordinates. It remains open which are the minimal coordinates needed to realize these configurations. Can they be  realized with polynomially large coordinates? In lower dimensions this kind of questions has already been considered (see~\cite{Erickson2002} and~\cite{Erickson2003}). 

 \end{enumerate}
\end{remarks}

\section{Lower bounds} 

It remains to discuss how many different labeled combinatorial types of
inscribable neighborly polytopes (and of neighborly $K$-Delaunay triangulations)
can be obtained with Construction~\ref{cons:construction}. These bounds were
obtained in~\cite{Padrol2013} using a construction that can be seen to be
equivalent. We will only sketch the main ideas for the originial proof, which is
based on oriented matroids, and refer to~\cite{Padrol2013} and~\cite[Chapter
5]{Padrolphd} for details.

 The main ingredient for these bounds is the following lemma from \cite{Padrol2013}, for which we sketch its proof. It ignores the variability provided by the signs of the $h_i$'s and only focuses on positive liftings. Stronger bounds are discussed in~\cite[Chapter 5]{Padrolphd}.%

\begin{lemma}[{\cite[Proposition~6.1]{Padrol2013}}, {\cite[Lemma~6.6]{Padrol2013}}]
Let $A$ be a configuration of $n> d+2$ labeled points in general position in~$\RR^d$, where $d\geq 2$ is even, that is the vertex set of a neibhborly polytope. 

Denote by $\cP$ the set of different labeled combinatorial types of polytopes~$P$ that fulfill:
\begin{enumerate}[(i),leftmargin=*]
 \item $P=\conv(\wwh B)$ is obtained by doing a couple of positive \lex-liftings
(as in Theorem~\ref{thm:inscribableandneighborly}) of some configuration $B$
obtained by relabeling the elements of $A$ and adding two extra points, and
 \item the labeled combinatorial type of $\conv(A)$ can be recovered from that of $P$.
\end{enumerate}

Then there are at least $\frac{(n+1)!}{(d+2)!}$ different elements in $\cP$.
\end{lemma}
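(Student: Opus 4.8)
The plan is to count the number of distinct configurations $B$ that can arise in step~(i), and then argue that almost all of them yield distinct labeled combinatorial types of $P$, using condition~(ii) to control collisions. First I would fix the configuration $A$ of $n$ points and observe that the construction in step~(i) is parametrized by a choice of permutation $\sigma \in \cS_n$ used to relabel $A$, together with the two extra points added afterwards; by Remark~\ref{rmks:construction} and Corollary~\ref{cor:combinatorialtypelifting}, the combinatorial type of $P=\conv(\wwh B)$ depends only on $\sigma$ (the positions of the two new points and the magnitudes of the lifting heights are irrelevant). So the map $\sigma \mapsto [\conv(\wwh B_\sigma)]$ is a well-defined map from $\cS_n$ to $\cP$, and it suffices to bound the size of its image from below.

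The key step is to understand the fibers of this map. Here condition~(ii) is essential: since the labeled combinatorial type of $\conv(A)$ can be recovered from that of $P$, if two permutations $\sigma$ and $\sigma'$ produce the same combinatorial type of $P$, then they produce the same \emph{relabeled} copy of $\conv(A)$ — that is, $\sigma$ and $\sigma'$ differ by a combinatorial automorphism of $\conv(A)$ composed with a permutation that does not affect the final combinatorial type. The natural way to make this precise is: the only freedom that can collapse two permutations is (a) relabeling symmetries internal to $\conv(A)$ itself and (b) the fact that the last few points in the lifting order play a degenerate role (the construction only "sees" the order up to the operations that are invisible to \lex-liftings). A clean way to organize this is to note that the construction is insensitive only to a bounded-index subgroup: I would show that the fiber over each combinatorial type has size at most $(d+2)!$, whence $|\cP| \geq n!/(d+2)!$, and then absorb the discrepancy between $n!/(d+2)!$ and $(n+1)!/(d+2)!$ by observing that one may equally well start from the configuration $A \cup \{a_{n+1}\}$ (one of the added points can be chosen to be a genuine vertex), effectively replacing $n$ by $n+1$ in the count — this matches the ``$n>d+2$'' hypothesis and the extra point in step~\ref{it:loop}.\ref{it:addpoints1} of the construction.

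For the fiber bound I would argue as follows. Given the combinatorial type of $P$, condition~(ii) recovers the labeled combinatorial type of $\conv(A)$ sitting inside it, and hence recovers $\sigma$ up to (i) a combinatorial automorphism of $\conv(A)$, and (ii) the ambiguity in the order of the last two lifted points, which by the definition of \lex-liftings do not see any earlier structure in a way that changes the combinatorics (their heights are "large enough" and the last point's position is free). Since $\conv(A)$ is a neighborly polytope with $n \geq d+3$ vertices, its combinatorial automorphism group acts on the label set, but — and this is the point that requires care — I would not try to bound that group directly; instead I would encode the recoverable data more carefully so that what is recovered from $[P]$ is an ordered sequence determining $\sigma$ up to a group of size dividing $(d+2)!$. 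This is exactly the combinatorial bookkeeping carried out in \cite[Section~6]{Padrol2013}, and I would cite it for the details rather than reproduce it.

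\textbf{Main obstacle.} The hard part is the fiber-size estimate, i.e.\ proving that two different relabelings $\sigma, \sigma'$ give combinatorially distinct $P$ \emph{unless} they are related by one of a small, explicitly bounded set of operations. The subtlety is that a priori a combinatorial coincidence $[\,\conv(\wwh B_\sigma)\,] = [\,\conv(\wwh B_{\sigma'})\,]$ need not be "visible" as a relabeling of $A$ at all — one must use condition~(ii) to force it to be, and then one must quantify the remaining ambiguity. Getting the constant exactly $(d+2)!$ (rather than merely "bounded in terms of $d$") is what makes the bound $\frac{(n+1)!}{(d+2)!}$ come out cleanly, and this is where the oriented-matroid description of \lex-liftings via lexicographic extensions (Lemma~\ref{lem:om}) does the real work: it pins down precisely which permutations are indistinguishable by the construction.
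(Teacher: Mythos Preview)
Your overall architecture is right --- parametrize by $\sigma\in\cS_n$, bound the fibers of $\sigma\mapsto [\conv(\wwh B_\sigma)]$ --- and it matches the paper's. But there is a genuine gap in how you propose to bound the fibers, and it is exactly the step you flag as the obstacle.

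You write that condition~(ii) ``recovers the labeled combinatorial type of $\conv(A)$ sitting inside it, and hence recovers $\sigma$ up to \dots''. That ``hence'' does not follow. Condition~(ii) tells you the labeled combinatorial type of $\conv(A)$, which you already knew; it says nothing about the \emph{order} in which the vertices were lifted. Two permutations $\sigma,\sigma'$ can produce the very same labeled $\conv(A)$ (they always do, once you undo the relabeling) and yet give different $P$'s, or the same $P$; condition~(ii) alone cannot distinguish these cases. The automorphism-group detour you sketch is a red herring for this reason --- even when $\conv(A)$ has trivial automorphism group, you still need a mechanism to read $\sigma$ off from $P$.

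The paper supplies this mechanism, and it is Shemer's rigidity theorem for even-dimensional neighborly polytopes: the combinatorial type of $P$ determines the combinatorial type of every subpolytope. This lets you iteratively identify and peel off the last-lifted vertex (it has a recognisable combinatorial signature), recover the subpolytope, and repeat, reading off $\sigma$ in reverse until you are down to a $(d+2)$-vertex simplex where all orderings coincide. That is why the fiber size is $(d+2)!/2$, not just ``bounded'', and it is not visible from Lemma~\ref{lem:om} alone. Your proposal never invokes rigidity, so the fiber bound is unsupported.

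A smaller point: your way of producing the extra factor of $n+1$ (``replace $n$ by $n+1$ by absorbing $a_{n+1}$ into the base configuration'') is not how the paper does it and does not obviously work --- $\conv(A\cup\{a_{n+1}\})$ need not be neighborly, and condition~(ii) concerns $\conv(A)$, not a larger polytope. The paper instead argues that, once $P$ and $a_{n+2}$ are fixed, at most two vertices of $\wwh B\setminus a_{n+2}$ can play the role of $a_{n+1}$, which contributes a genuine extra factor of $(n+1)/2$ to the count.
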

\begin{proof}[Proof idea]
To obtain $P$, first a permutation $\sigma$ is applied to the labels of $A$.
Next, two elements $\{a_{n+1},a_{n+2}\}$ are appended to obtain $B$. Then $\wh
B$ is a positive \lex-lifting of $B$ and $\wwh{B}$ is a positive \lex-lifting of
$\wh B$ (with respect to the same ordering). Finally the permutation
$\sigma^{-1}$ is applied on the labels of $\wwh B$ so that the original labeling
of $A$ is preserved.

In {\cite[Proposition~6.1]{Padrol2013}} it is shown that, if we restrict to the
case when both \lex-liftings are positive, then the $n-d-2$ first elements of
$\sigma$ can be recovered from the combinatorics of~$P$, and that $2$ different
cases can be distinguished for the last $d+2$ elements. The proof relies on a result of 
Shemer \cite[Theorem 2.12]{Shemer1982} that states that even dimensional neighborly polytopes are rigid
in the oriented matroid sense. That is, that the combinatorial type of an even dimensional neighborly 
polytope completely determines the combinatorial type of all its subpolytopes (convex hulls of subsets
of vertices).

A final step is that, when $\conv(A)$ is neighborly, if $a_{n+2}$ and the face lattice of $P$ are fixed, then there are at most two points of $\wwh B\setminus a_{n+2}$ that could be $a_{n+1}$. Hence the label of $a_{n+1}$ can also be chosen almost arbitrarily, adding a factor of $\frac{n+1}{2}$ to the number of combinatorial types.
\end{proof}

Applying recursively this lemma one obtains the following bound, of order $n^{\frac{d}{2}n(1+o(1))}$ as $n\rightarrow \infty$, which is estimated using Euler-Maclaurin approximation.
\begin{theorem}\label{thm:bound}
 For every \sscb-body $K\subset \RR^{d+1}$, where $d$ is even, the number $\inpK{n}{d}$ of different labeled combinatorial types of $K$-inscribable neighborly $d$-polytopes with $n$ vertices, fulfills 
\begin{align}
  \inpK{n}{d}&\geq\prod_{i=1}^{\frac{d}{2}} {\frac{(n-d-1+2i)!}{(2i)!}}\notag\\
&\geq\frac{\left(n-1\right) ^{\left( \frac{n-1}{2}\right) ^{2}}}{{(n-d-1)}^{{\left(\frac{n-d-1}{2}\right)}^{2}} d  ^{{(\frac{d}{2})}^{2}}{\e^{\frac{3d(n-d-1)}{4}}}}\geq\left( \frac{n-1}{\e^{3/2}}\right)^{\frac12d(n-d-1)}.\notag
\end{align}
\end{theorem}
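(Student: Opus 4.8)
The plan is to establish the first inequality by a direct counting argument, then reduce the remaining two inequalities to elementary (if tedious) estimates on factorials.

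\smallskip

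\textbf{Step 1: the combinatorial lower bound.} First I would apply the preceding lemma recursively, following the inductive structure of Construction~\ref{cons:construction}. Starting from a neighborly polygon with $n-d-1+2 = n-d+1$ vertices (so that after $\frac d2$ double-liftings we reach $n$ vertices in dimension $d$), each iteration of the loop in step~\ref{it:loop} takes a configuration $A_{i-1}$ with $n_{i-1}$ vertices, relabels it, adds two points, and performs two positive \lex-liftings. By the lemma, fixing the labeled combinatorial type of the output polytope $P_i$ determines the labeled combinatorial type of $P_{i-1}$ (this is exactly condition~(ii) of the lemma, and uses Shemer's rigidity of even-dimensional neighborly polytopes). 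Hence the map from (choice of permutation $\sigma$ and label of the first added point at stage $i$) to combinatorial types of $P_i$ is injective once the type of $P_{i-1}$ is pinned down, so the numbers of realizable types multiply across the $\frac d2$ stages. At stage $i$ the configuration $A_{i-1}$ has $n_{i-1} = n-d-1+2i$ vertices before the two points are added, so the lemma contributes a factor of at least $\frac{(n_{i-1}+1)!}{(d_{i-1}+2)!} = \frac{(n-d+2i)!}{(2i)!}$. Wait—I need to reconcile the index: with $n_{i-1}+1 = n-d-1+2i$ the factor is $\frac{(n-d-1+2i)!}{(2i)!}$, and multiplying over $i=1,\dots,\frac d2$ gives exactly $\prod_{i=1}^{d/2}\frac{(n-d-1+2i)!}{(2i)!}$. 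This yields the first displayed inequality.

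\smallskip

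\textbf{Step 2: the middle inequality.} Next I would bound the product $\prod_{i=1}^{d/2}\frac{(n-d-1+2i)!}{(2i)!}$ from below. Write $m = n-d-1$ and note the numerators are $(m+2)!,(m+4)!,\dots,(m+d)! = n!\cdot\frac{(m+2)!\cdots(m+d-2)!}{\text{(tail)}}$; more usefully, $\prod_i (m+2i)! = \frac{\prod_{j=1}^{n-1}(\text{stuff})}{\prod_{k=1}^{m}(\text{stuff})}$ telescopes when one uses $\log k!$ and the integral estimate $\log k! = k\log k - k + O(\log k)$. Concretely, $\sum_{i=1}^{d/2}\log((m+2i)!) \geq \sum \big((m+2i)\log(m+2i)-(m+2i)\big)$, and approximating the sum over the even arithmetic progression $m+2,m+4,\dots,m+d=n-1$ by $\frac12\int_{m}^{n-1} t\log t\,dt = \frac14\big((n-1)^2\log(n-1) - m^2\log m\big) - \frac18\big((n-1)^2-m^2\big)$ produces the leading term $\log\frac{(n-1)^{((n-1)/2)^2}}{m^{(m/2)^2}}$. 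The denominators $\prod_{i=1}^{d/2}(2i)!$ are bounded above by $\big(d!\big)^{d/2}$-type expressions, contributing the $d^{(d/2)^2}$ factor, and collecting all the linear error terms of the form $-\frac14((n-1)^2-m^2) = -\frac14 d(2m+d)\sim -\frac{dm}{2}$ together with the $\log$-order corrections gives the $\e^{3d(n-d-1)/4}$ in the denominator. This is the step I expect to be the main obstacle: it is not deep, but getting the Euler–Maclaurin bookkeeping to land exactly on the stated constants (in particular the coefficient $\frac34$ in the exponent of $\e$) requires care with the endpoint corrections and with the direction of each inequality.

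\smallskip

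\textbf{Step 3: the final simplification.} Finally I would derive the clean bound $\left(\frac{n-1}{\e^{3/2}}\right)^{\frac12 d(n-d-1)}$ from the middle expression. Since $n-d-1 = m \leq n-1$, we have $(n-1)^{((n-1)/2)^2} = (n-1)^{((m+d)/2)^2} \geq (n-1)^{(m/2)^2}\cdot(n-1)^{md/2}$, so dividing by $m^{(m/2)^2} \leq (n-1)^{(m/2)^2}$ already leaves at least $(n-1)^{md/2}$; absorbing the $d^{(d/2)^2}$ factor uses $d \leq n-1$ so that $d^{(d/2)^2} \leq (n-1)^{(d/2)^2} \leq (n-1)^{md/2}$ whenever $d \leq 2m$ (which holds for $n$ large relative to $d$, the regime of the statement), leaving a spare $(n-1)^{md/2}$ that cancels against it; and $\e^{3d m/4}$ is replaced by observing $\e^{3dm/4} = \big(\e^{3/2}\big)^{dm/2}$, so the whole quotient is at least $\left(\frac{n-1}{\e^{3/2}}\right)^{dm/2} = \left(\frac{n-1}{\e^{3/2}}\right)^{\frac12 d(n-d-1)}$. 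A brief remark that the asymptotic order is $n^{\frac d2 n(1+o(1))}$ as $n\to\infty$ with $d$ fixed follows immediately by taking logarithms, since $\frac12 d(n-d-1)\log(n-1) \sim \frac d2 n\log n$ and the $-\frac32\cdot\frac12 d(n-d-1)$ term is of lower order.
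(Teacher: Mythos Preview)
Your approach is essentially the same as the paper's: recursive application of the preceding lemma to obtain the product formula, followed by Euler--Maclaurin estimates for the numerical inequalities. The paper's own argument is even terser than yours (one sentence each for the combinatorial and analytic parts, deferring all details to~\cite{Padrol2013} and~\cite[Chapter~5]{Padrolphd}); the only loose end in your write-up is the index reconciliation you flagged in Step~1---the extra $i=1$ factor $\tfrac{(n-d+1)!}{2!}$ comes not from the lemma but from the $(n_0-1)!/2$ labeled cyclic orderings of the initial $(n-d+2)$-gon, which you should make explicit.
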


By Lemma~\ref{lem:KdelaunayKinscribed}, the same bound holds for the number of labeled combinatorial types of neighborly Delaunay triangulations of configurations of $n-1$ points in $\RR^{d-1}$.

 For the odd-dimensional case, observe that if $H$ is a hyperplane that intersects $K$ in the interior, then $K\cap H$ is also an \sscb-body and the pyramid over any even dimensional $(K\cap H)$-inscribable neighborly polytope is $K$-inscribable and neighborly. This shows that the bound $n^{\frac{d}{2}n(1+o(1))}$ also applies for odd dimensional configurations.
 Moreover, although the use of labeled types is needed to prove these bounds, observe that we can easily recover bounds for non-labeled combinatorial types just by dividing by $n!$. The bounds obtained this way are still superexponential and of comparable asymptotics.

\section{Final observations}\label{sec:observations}
\subsection{Relaxing the conditions}
Our statements concern smooth strictly convex bodies. However, our proofs only use a single smooth strictly convex point in $\partial K$; 
namely, the center $\np$ of the $K$-stereographic projection $\csp$. Hence, our results can be generalized to bodies that have one smooth strictly convex point in their boundary.

\subsubsection{Smoothness}
Actually, all our results also hold directly if we completely remove the hypothesis of smoothness (while keeping strict convexity). 
Indeed, for any $d$-dimensional convex body, the set of singular (non-smooth) points of $\partial K$ always has $(d-1)$-dimensional Hausdorff measure zero (cf. \cite[Theorem~2.2.4]{Schneider}), so there is always a smooth point in $\partial K$.

\subsubsection{Strict convexity}
In contrast, strict convexity cannot be avoided. For example, no simplicial polytope with more than $d(d+1)$ vertices can be inscribed on the boundary of a $d$-simplex, since otherwise there would be $d+1$ vertices lying in a common facet, which cannot happen for simplicial polytopes.

\subsection{Delaunay triangulations}

\subsubsection{Upper bound theorem for balls}\label{sec:ubtb}
A consequence of Stanley's Upper Bound Theorem for simplicial spheres \cite{StanleyCCA} 
is the following characterization of triangulations with maximal number of faces.

\begin{theorem}[{Upper Bound Theorem for simplicial balls
\cite[Corollary~2.6.5]{DeLoeraRambauSantosBOOK}}]\label{thm:ubtb}
A triangulation has the maximal number of
$i$-dimensional cells among all triangulations of sets of $n$ points in $\RR^d$
if and only if it is neighborly and its convex hull is a $d$-simplex.
\end{theorem}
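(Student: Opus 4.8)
The plan is to prove both implications of the characterization by combining the Upper Bound Theorem for simplicial spheres with the combinatorics of triangulations developed in the preceding sections. Throughout, fix $n$ and $d$ and work with triangulations of configurations $A\subset\RR^d$ with $|A|=n$. Note that every triangulation $\cT$ of $A$ can be realized as the lower envelope of a lifting of $A$ to $\RR^{d+1}$ (a regular triangulation suffices for the upper-bound direction by a standard perturbation argument, but in fact any triangulation can be realized as the boundary complex of some polytope, which is what we really need). The key structural fact is: if $\cT$ is a triangulation of $A$ and $P=\conv(A)$, then the number of $i$-cells of $\cT$ is controlled by the $f$-vector of a simplicial $(d+1)$-polytope obtained by coning, unless $P$ is already a simplex.

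First I would prove the ``if'' direction. Suppose $\cT$ is a neighborly triangulation and $\conv(A)$ is a $d$-simplex; I want to show $\cT$ simultaneously maximizes the number of $i$-cells for every $i$. Since $\conv(A)$ is a simplex with $d+1$ vertices, the remaining $n-d-1$ points are interior, and adding the apex $\np$ via a positive lexicographic lifting (as in Lemma~\ref{lem:placinglifting} and Lemma~\ref{lem:delaunayislifting}) turns $\cT$ into the lower envelope of a polytope $\wwh P$ on $n+1$ vertices whose whole boundary is simplicial; moreover the upper envelope is a single simplex (the cone over the simplex $\conv(A)$), so $\wwh P$ is a simplicial $(d+1)$-polytope whose boundary complex is $\cT$ together with one extra top-dimensional simplex. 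Neighborliness of $\cT$ as a triangulation translates into $\lfloor(d+1)/2\rfloor$-neighborliness of $\wwh P$ as a polytope, and then Stanley's Upper Bound Theorem (or McMullen's for polytopes) says $\wwh P$ has the componentwise maximal $f$-vector among all simplicial $(d+1)$-polytopes with $n+1$ vertices. Subtracting the contribution of the single apex simplex shows $\cT$ has the maximal number of $i$-cells among all triangulations of $n$-point configurations in $\RR^d$ whose convex hull is a simplex; a separate short argument (any triangulation of a non-simplex configuration can be capped off in the same way to a simplicial $(d+1)$-polytope with $n+1$ vertices, but now with more than one facet ``used up'' on the boundary of $\conv(A)$, hence strictly fewer interior cells) shows the simplex hypothesis is not a loss of generality, i.e. the maximum over all configurations is achieved here.

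For the ``only if'' direction I would argue by contrapositive using the same lift-and-cone dictionary. Given any triangulation $\cT$ of an $n$-point configuration $A\subset\RR^d$, realize it as the lower envelope of $\conv(\wh A)\subset\RR^{d+1}$; the number of $i$-cells of $\cT$ equals $f_i$ of the lower envelope. Capping off with an apex produces a simplicial $(d+1)$-polytope $Q$ on at most $n+1$ vertices (exactly $n+1$ iff $\conv(A)$ is a simplex, fewer interior-to-the-cap cells otherwise), and the UBT for simplicial spheres forces $f_i(\partial Q)\le f_i(C_{d+1}(n+1))$ with equality for all $i$ in the top range only if $\partial Q$ is a neighborly sphere with the right number of vertices. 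Equality in the triangulation count then forces, first, that all $n+1$ points are vertices of $Q$, which (as above) forces $\conv(A)$ to be a simplex; and second, that $\partial Q$ is $\lfloor(d+1)/2\rfloor$-neighborly, which unwinds to $\cT$ being a neighborly triangulation. I expect the main obstacle to be the bookkeeping in this direction: one must be careful that ``maximal for every $i$'' is used (a triangulation maximal only for a single intermediate $i$ need not be neighborly), and one must correctly account for the boundary faces of $\conv(A)$ that are ``covered'' by the cone — in particular handling the case where $\conv(A)$ is not simplicial requires either a perturbation or the observation that non-simplicial boundary faces only decrease the relevant face numbers. The cleanest route is to phrase everything in terms of the simplicial sphere obtained by coning and to invoke the equality case of Stanley's UBT verbatim.
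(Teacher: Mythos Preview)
The paper does not prove this statement; it is quoted verbatim from \cite[Corollary~2.6.5]{DeLoeraRambauSantosBOOK} and used as a black box in Section~\ref{sec:ubtb}. There is therefore no in-paper proof to compare your proposal against.

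On the substance of your argument: the core idea --- cone off the boundary of the triangulation with a new apex to obtain a simplicial $d$-sphere on $n+1$ vertices, then invoke Stanley's Upper Bound Theorem and its equality case --- is the standard route and is essentially what De~Loera--Rambau--Santos do. However, your opening geometric remarks are wrong and should be removed. It is \emph{not} true that every triangulation of $A$ can be realized as the lower envelope of a lifting (only regular triangulations can), and it is \emph{not} true that every simplicial sphere is the boundary complex of a polytope. Fortunately neither claim is needed: the coning $\cT\cup(v*\partial\cT)$ is a purely combinatorial construction producing a simplicial $d$-sphere, and Stanley's theorem applies to all simplicial spheres, polytopal or not. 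Your references to Lemmas~\ref{lem:placinglifting} and~\ref{lem:delaunayislifting} are also beside the point here; those lemmas concern specific geometric liftings used elsewhere in the paper and play no role in this counting argument.

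For the bookkeeping in the ``only if'' direction, the clean formulation is $f_i(\cT)=f_i(S)-f_{i-1}(\partial\cT)$, where $S$ is the coned sphere. One then uses that $f_{i-1}(\partial\cT)\ge\binom{d+1}{i}$ with equality iff $\partial\cT$ is the boundary of a $d$-simplex, together with the equality case of the UBT for~$S$. Your sketch gestures at this but conflates it with the geometric capping picture; once you drop the liftings and work combinatorially, the argument becomes short and the obstacles you anticipate (non-simplicial $\conv(A)$, etc.) disappear, since $\partial\cT$ is always a simplicial sphere regardless of whether $\conv(A)$ is simplicial.
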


However, the convex hull of the neighborly $K$-Delaunay
triangulations obtained with Theorem~\ref{thm:neighborlyanddelaunay} 
 is not a simplex. To obtain
such triangulations, we need neighborly polytopes with a $K$-inscribed
realization that can be stacked on a facet such that the result is still
$K$-inscribed (see \cite{Gonskaphd} or \cite{Seidel1991}).
 
Construction~\ref{cons:construction} can be modified to get them, following
\cite[Remark 1.3.5]{Gonskaphd}. Indeed, at the beginning of the last iteration of the construction (from
$B_{\ffloor{d-4}{2}}$ to $\wh B_{\ffloor{d-4}{2}}$), apply a stellar
subdivision to the the first simplex directly after it appears, and then continue as usual. What we get
then is the vertex projection of a $K$-inscribed neighborly polytope that has been
stacked with a point $p$ on $\partial K$. We can then apply a
$K$-stereographic projection with center $p$ to get the desired triangulation
with a maximal number of faces.

\subsubsection{Other Delaunay triangulations} 
The concept of Delaunay triangulation has several generalizations. In particular, 
for non-Euclidean metrics. 
Construction~\ref{cons:construction} can be
suitably adapted for many of these generalizations, 
providing many neighborly triangulations. For example, this can be done
for all smooth strictly convex distance functions, which are those
whose unit ball is an \sscb-body.
These should not be confused with our $K$-Delaunay triangulations. 
Although Delaunay triangulations for the Euclidean metric coincide with $\ball{d+1}$-Delaunay 
triangulations, this is a rare phenomenon: $K$-Delaunay triangulations are always 
regular, which does not happen for those given by most \sscb-distances~\cite{Santos1996}.

\subsection{Universally inscribable polytopes}\label{sec:universallyinscribable}

We showed that Gale sewn polytopes are inscribable in any \sscb-body. We will 
call a polytope \defn{universally inscribable} if it has this property. 

\subsubsection{Lawrence polytopes}

The following observation is due to Karim Adiprasito (personal communication): 
Lawrence polytopes are always universally inscribable. Indeed, the arguments used in
\cite[Proposition~6.5.8]{AdiprasitoZiegler2014} to prove their inscribability
extend naturally to any \sscb-body~$K$.

\subsubsection{Stacked polytopes}
Since we have seen that there are universally inscribable neighborly polytopes,
it is natural to ask whether there are also universally inscribable stacked polytopes with an arbitrary number of vertices.
The answer is yes. Take a sequence of  positive numbers 
$a_1\ll a_2\ll \cdots \ll a_n$ and consider the point configuration 
\[A=\{e_1,\dots, e_{d-1},-\sum_{i=1}^{d-1} e_i,a_1 e_d, a_2e_d,\dots,a_n e_d\},\]
where $e_i$ are the standard basis vectors. Then the $K$-Delaunay subdivision
of $A$ is an iterated stellar subdivision of the same face of a simplex, which 
is lifted to a stacked $K$-inscribed polytope whose stacked triangulation's dual graph is a path.

\subsubsection{Inscribability does not imply universal inscribability}
Another observation due to Karim Adiprasito (personal communication) is that 
there exist polytopes that are inscribable in the ball but that are not universally inscribable. 
The key 
is the infinite family of projectively unique inscribed polytopes
in a fixed dimension constructed by himself together with G\"unter Ziegler \cite[Theorem~6.5.7]{AdiprasitoZiegler2014}.

Consider an analytic convex body~$K$ in which all of the Adiprasito--Ziegler projectively unique polytopes can be inscribed. Then one of the $4$-dimensional sections of $K$ must be a quadric. Indeed, every Adiprasito--Ziegler polytope has a $4$-dimensional section that is a \emph{cross-bedding cubical torus} (see \cite{AdiprasitoZiegler2014}). 
Since the vertices of this section have an accumulation point by compactness, the analyticity of the section implies that the section with~$K$ is itself a quadric. (Recall that each of these polytopes is inscribable in the ball and projectively unique, and hence in any of their realizations the cross-bedding cubical tori are inscribed in a quadric.)

Since it is easy to construct analytic convex bodies without such a section, the claim follows.

\section*{Acknowledgements}
We would like to thank Karim Adiprasito for sharing his insights with us and letting us add them to our paper; and Moritz Firsching
for checking inscribability of neighborly $4$-polytopes with few vertices.

\bibliographystyle{plain}
\bibliography{InscribedNeighborly}

\begin{thebibliography}{10}

\bibitem{AdiprasitoPadrolTheran2014}
Karim~A. Adiprasito, Arnau Padrol, and Louis Theran.
\newblock {Universality theorems for inscribed polytopes and Delaunay
  triangulations}.
\newblock {\em {Discrete Comput. Geom.}}, in press.
\newblock Preprint available at \href{http://arxiv.org/abs/1406.7831}{
  arXiv:1406.7831}.

\bibitem{AdiprasitoZiegler2014}
Karim~A. Adiprasito and G{\"u}nter~M. Ziegler.
\newblock {Many projectively unique polytopes}.
\newblock {\em Invent. Math.}, pages 1--72, 2014.

\bibitem{Alon1986}
Noga Alon.
\newblock {The number of polytopes, configurations and real matroids}.
\newblock {\em Mathematika}, 33(1):62--71, 1986.

\bibitem{BarvinokNovik2008}
Alexander Barvinok and Isabella Novik.
\newblock {A centrally symmetric version of the cyclic polytope}.
\newblock {\em Discrete Comput. Geom.}, 39(1-3):76--99, 2008.

\bibitem{OrientedMatroids1993}
Anders Bj{\"o}rner, Michel {Las Vergnas}, Bernd Sturmfels, Neil White, and
  G{\"u}nter~M. Ziegler.
\newblock {\em {Oriented matroids.}}
\newblock {Encyclopedia of Mathematics and Its Applications. 46. Cambridge:
  Cambridge University Press. 516 p. }, 1993.

\bibitem{Brown1979}
Kevin~Q. Brown.
\newblock {Voronoi diagrams from convex hulls.}
\newblock {\em Inf. Process. Lett.}, 9:223--228, 1979.

\bibitem{Caratheodory1911}
Constantin Carath{\'e}odory.
\newblock {{\"U}ber den Variabilit{\"a}tsbereich der {Fourier}'schen Konstanten
  von positiven harmonischen Funktionen.}
\newblock {\em Rendiconto del Circolo Matematico di Palermo}, 32:193--217,
  1911.

\bibitem{DeLoeraRambauSantosBOOK}
Jes{\'u}s~A. {De Loera}, J{\"o}rg Rambau, and Francisco Santos.
\newblock {\em {Triangulations}}, volume~25 of {\em {Algorithms and Computation
  in Mathematics}}.
\newblock Springer-Verlag, Berlin, 2010.
\newblock Structures for algorithms and applications.

\bibitem{Edelsbrunner2006}
Herbert Edelsbrunner.
\newblock {\em {Geometry and topology for mesh generation. }}.
\newblock {Cambridge: Cambridge University Press}, 2006.

\bibitem{Erickson2002}
Jeff Erickson.
\newblock {Dense point sets have sparse Delaunay triangulations or ``\dots but
  not too nasty''.}
\newblock In {\em {Proceedings of the thirteenth annual ACM-SIAM symposium on
  Discrete algorithms}}, pages 125--134. {Philadelphia, PA: Society for
  Industrial and Applied Mathematics (SIAM)}, 2002.

\bibitem{Erickson2003}
Jeff Erickson.
\newblock {Nice point sets can have nasty Delaunay triangulations.}
\newblock {\em Discrete Comput. Geom.}, 30(1):109--132, 2003.

\bibitem{Gonskaphd}
Bernd Gonska.
\newblock {\em {Inscribable polytopes via Delaunay triangulations}}.
\newblock {P}h.{D}. thesis, Freie Universit{{\"a}t} Berlin, 2013.

\bibitem{GonskaZiegler2013}
Bernd {Gonska} and G{\"u}nter~M. {Ziegler}.
\newblock {Inscribable stacked polytopes.}
\newblock {\em {Adv. Geom.}}, 13(4):723--740, 2013.

\bibitem{GoodmanPollack1986}
Jacob~E. Goodman and Richard Pollack.
\newblock {Upper bounds for configurations and polytopes in {${\bf R}^d$}.}
\newblock {\em Discrete Comput. Geom.}, 1:219--227, 1986.

\bibitem{GrunbaumConvexPolytopes}
Branko Gr{\"u}nbaum.
\newblock {\em {Convex polytopes}}, volume 221 of {\em {Graduate Texts in
  Mathematics}}.
\newblock Springer-Verlag, New York, second edition, 2003.
\newblock Prepared and with a preface by Volker Kaibel, Victor Klee and
  G{\"u}nter M. Ziegler.

\bibitem{HodgsonRivinSmith1992}
Craig~D. Hodgson, Igor Rivin, and Warren~D. Smith.
\newblock {A characterization of convex hyperbolic polyhedra and of convex
  polyhedra inscribed in the sphere}.
\newblock {\em Bull. Amer. Math. Soc. (N.S.)}, 27(2):246--251, 1992.

\bibitem{mathoverflow}
Sergei~Ivanov (http://mathoverflow.net/users/4354/sergei ivanov).
\newblock {Can all convex polytopes be realized with vertices on surface of
  convex body?}
\newblock MathOverflow.
\newblock URL:http://mathoverflow.net/q/107113 (version: 2012-09-13).

\bibitem{McMullen1970}
Peter McMullen.
\newblock {The maximum numbers of faces of a convex polytope.}
\newblock {\em Mathematika, Lond.}, 17:179--184, 1970.

\bibitem{Padrol2013}
Arnau Padrol.
\newblock {Many neighborly polytopes and oriented matroids.}
\newblock {\em {Discrete Comput. Geom.}}, 50(4):865--902, 2013.

\bibitem{Padrolphd}
Arnau Padrol.
\newblock {\em {Neighborly and almost neighborly configurations, and their
  duals}}.
\newblock {P}h.{D}. {T}hesis, {Universitat Polit{\`e}cnica de Catalunya}, 2013.

\bibitem{PadrolTheran2014}
Arnau Padrol and Louis Theran.
\newblock {Delaunay Triangulations with Disconnected Realization Spaces}.
\newblock In {\em {Proceedings of the Thirtieth Annual Symposium on
  Computational Geometry}}, {SOCG'14}, pages 163--170, New York, NY, USA, 2014.
  ACM.

\bibitem{Rivin1996}
Igor Rivin.
\newblock {A characterization of ideal polyhedra in hyperbolic {$3$}-space}.
\newblock {\em Ann. of Math. (2)}, 143(1):51--70, 1996.

\bibitem{Santos1996}
F.~{Santos}.
\newblock {On Delaunay oriented matroids for convex distance functions.}
\newblock {\em {Discrete Comput. Geom.}}, 16(2):197--210, 1996.

\bibitem{Schneider}
Rolf Schneider.
\newblock {\em {Convex bodies: the {B}runn-{M}inkowski theory}}, volume~44 of
  {\em {Encyclopedia of Mathematics and its Applications}}.
\newblock Cambridge University Press, Cambridge, 1993.

\bibitem{Schramm1992}
Oded Schramm.
\newblock {How to cage an egg}.
\newblock {\em Invent. Math.}, 107(3):543--560, 1992.

\bibitem{Seidel1985}
Raimund Seidel.
\newblock {A method for proving lower bounds for certain geometric problems}.
\newblock In G.~T. Toussaint, editor, {\em {Computational Geometry}}, pages
  319--334. North-Holland, Amsterdam, Netherlands, 1985.

\bibitem{Seidel1987}
Raimund Seidel.
\newblock {On the number of faces in higher-dimensional Voronoi diagrams}.
\newblock In {\em {Proceedings of the third annual symposium on Computational
  geometry}}, {SCG '87}, pages 181--185, New York, NY, USA, 1987. ACM.

\bibitem{Seidel1991}
Raimund Seidel.
\newblock {Exact upper bounds for the number of faces in $d$-dimensional
  Voronoi diagrams.}
\newblock {\em Applied Geometry and Discrete Mathematics: The Victor Klee
  Festschrift}, 4:517--530, 1991.

\bibitem{Shemer1982}
Ido Shemer.
\newblock {Neighborly polytopes.}
\newblock {\em Isr. J. Math.}, 43:291--314, 1982.

\bibitem{StanleyCCA}
Richard~P. Stanley.
\newblock {\em {Combinatorics and commutative algebra}}, volume~41 of {\em
  {Progress in Mathematics}}.
\newblock Birkh{\"a}user Boston Inc., Boston, MA, second edition, 1996.

\bibitem{Steiner1832}
Jacob Steiner.
\newblock {\em {\em {S}ystematische {E}ntwicklung der {A}bh{\"a}ngigkeit
  geometrischer {G}estalten von einander}}.
\newblock {Fincke, Berlin}, 1832.
\newblock Also in: Gesammelte Werke, Vol.~1, Reimer, Berlin 1881, pp. 229--458.

\bibitem{Steinitz1928}
Ernst Steinitz.
\newblock {{\"U}ber isoperimetrische {P}robleme bei konvexen {P}olyedern}.
\newblock {\em J. Reine Angew. Math.}, 159:133--143, 1928.

\bibitem{Sturmfels1987}
Bernd Sturmfels.
\newblock {Cyclic polytopes and d-order curves.}
\newblock {\em Geom. Dedicata}, 24:103--107, 1987.

\end{thebibliography}
\end{document}